\newlength\figureheight
\newlength\figurewidth
\newtheorem{definition}{Definition}
\newtheorem{theorem}{Theorem}
\newtheorem{lemma}[theorem]{Lemma}
\newtheorem{proposition}[theorem]{Proposition}
\pgfplotsset{compat=newest}
\newcommand{\R}{{\mathbb{R}}}
\newcommand{\N}{{\mathbb{N}}}
\newcommand{\Pre}{\mathrm{Pre}}
\newcommand{\Post}{\textrm{Post}}
\newcommand\eqbir{\mathrel{\overset{\makebox[0pt]{\mbox{\normalfont\tiny\sffamily (1)}}}{=}}}
\newcommand\eqiki{\mathrel{\overset{\makebox[0pt]{\mbox{\normalfont\tiny\sffamily (2)}}}{=}}}
\newcommand\incbir{\mathrel{\overset{\makebox[0pt]{\mbox{\normalfont\tiny\sffamily (1)}}}{\subseteq}}}
\newcommand\inciki{\mathrel{\overset{\makebox[0pt]{\mbox{\normalfont\tiny\sffamily (2)}}}{\subseteq}}}
\newcommand\equivbir{\mathrel{\overset{\makebox[0pt]{\mbox{\normalfont\tiny\sffamily (1)}}}{\equiv}}}
\newcommand\equiviki{\mathrel{\overset{\makebox[0pt]{\mbox{\normalfont\tiny\sffamily (2)}}}{\equiv}}}
\newcommand\equivuc{\mathrel{\overset{\makebox[0pt]{\mbox{\normalfont\tiny\sffamily (3)}}}{\equiv}}}
\begin{document}
%
\title{Mode-Target Games: \\ Reactive synthesis for control applications}
%
\author{Ayca Balkan, Moshe Vardi, Paulo Tabuada
}
%
%
%
\maketitle
\begin{abstract}
In this paper we introduce a class of Linear Temporal Logic (LTL) specifications for which the problem of synthesizing controllers can be solved in polynomial time. The new class of specifications is an LTL fragment that we term \emph{Mode-Target} (MT) and is inspired by numerous control applications where there are modes and corresponding (possibly multiple) targets for each mode. We formulate the problem of synthesizing a controller enforcing an MT specification as a game and provide an algorithm that requires $O(\sum_i t_i n^2)$ symbolic steps, where $n$ is the number of states in the game graph, and $t_i$ is the number of targets corresponding to mode $i$.

\end{abstract}

\section{Introduction}
The results in this paper are developed under the correct-by-design philosophy for Cyber-Physical Systems (CPS) advocating control design methodologies that produce, not only the controller, but also a proof of its correctness. This design philosophy should be contrasted with the widely used design-and-verify approach under which a designer re-designs the controller to weed out the bugs that are found during multiple verification rounds. By placing greater emphasis and effort in the design phase it is possible to greatly reduce the verification efforts thereby reducing the design time and cost of complex CPS \cite{intro1, intro2, intro3, intro4}. 

The correct-by-design philosophy, however, is not without its own challenges and the purpose of this paper is to address one of the most critical: computational complexity. If one takes Linear Temporal Logic (LTL) as the specification formalism, it is known that synthesizing a controller enforcing such specifications is doubly exponential in the length of the formula.
%
This led several researchers to seek fragments of LTL that are small enough for the complexity of synthesis to be lower, yet large enough to be practically relevant ~\cite{fragment1,fragment2,GR1,GeneralizedRabin,wolf,fragment3}. Among these, the one that had the biggest practical impact was the Generalized Reactivity (1) fragment, abbreviated as GR(1), for which the controller synthesis can be solved in polynomial time in the size of the transition system \cite{GR1}. Even though the GR(1) fragment was not originally intended for control applications, several researchers demonstrated its usefulness to synthesize correct-by-design controllers in practical scenarios ~\cite{application1,application2}. Later, extending the ideas in \cite{GR1}, the Generalized Rabin (1) fragment was shown to be the largest class of LTL specifications for which the controller synthesis problem is still polynomial in the size of the transition system, unless P=NP \cite{GeneralizedRabin}. 

In this paper,  inspired by control applications, we introduce a new fragment of  LTL termed Mode-Target (MT). An MT formula describes a setting where there are modes and corresponding targets for each mode. When the system is in a certain mode, the specification requires the system to reach one of the possible targets for that mode and stay there as long as the mode does not change. If the mode changes, there is no obligation to reach or stay within  the target region of the previous mode. We use MT formulas to define mode-target games, a subclass of LTL games. The winning condition of an MT game is an MT formula and, moreover, the game graph conforms to additional restrictions on the structure of the modes. We believe that modeling the desired behavior of control systems in this way, via modes and targets, is quite natural for designers. We support this claim in Section \ref{problemformulation} by giving three concrete examples from different application domains that illustrate the usefulness of MT games. The first example is an adaptive cruise controller, whose specifications are outlined by the International Standardization Organization (ISO). The second example builds on \cite{jyoHSCC}, where researchers from the Toyota Technical Center described the desired behavior for an air-fuel-ratio controller in signal temporal logic. The third example is the control of certain chemicals inside a nuclear power plant during shutdown and startup operations as outlined in \cite{nuclearpowerplant}. We show that the controller synthesis problem for all of these examples can be posed as finding a winning strategy for an MT game.

The contributions of this work can be summarized as follows:
\begin{itemize}
\item We propose MT as a practically useful
LTL fragment from a modeling perspective. Doing so, we extend an earlier version of this work where a more restricted class of formulas was introduced as MT formulas \cite{ADHSBalkan}. We provide three concrete control applications as an illustration of the large class of problems that can be naturally modeled as MT games. 
\item We introduce the notion of simple games that abstracts the key properties of GR(1) and MT games so as to prove the correctness and complexity of the proposed algorithms in a transparent manner. In doing so, we provide a new and simpler proof for the correctness and complexity estimates of the existing controller synthesis algorithms for GR(1) while highlighting the commonalities and differences between GR(1) and MT games. In particular, we show that MT games are also GR(1) games.
\item We propose an algorithm to synthesize controllers enforcing MT specifications which requires $O(\sum_i t_i n^2)$ symbolic steps where $n$ is the number of states in the game graph and $t_i$ is the number of targets corresponding to mode $i$. In contrast, the complexity of the algorithm resulting from embedding MT games into GR(1) games and using existing synthesis algorithms for the GR(1) fragment is $O(\sum_i tn^2)$ where $t$ is the largest number of modes across all the targets. Although these two complexity upper bounds coincide when the number of targets for each mode is the same, we empirically show in Section \ref{experiments} that the proposed synthesis algorithm still outperforms the synthesis algorithm obtained via the GR(1) embedding in this situation.
\end{itemize}
%

The rest of the paper is organized as follows. In Section~\ref{preliminaries},
we review the syntax and semantics of LTL and introduce LTL games. We formally define MT games in Section~\ref{problemformulation} and illustrate their usefulness via examples from control. In Section~\ref{directMT} we present an algorithm for solving MT games. We then show in Section~\ref{algorithmsection} that every MT game can be formulated as a GR(1) game. This leads to an alternative solution for MT games via existing algorithms to solve GR(1) games. We experimentally compare the two algorithms for the solution of MT games in Section~\ref{experiments} and conclude with Section~\ref{conclusion}.

\section{Preliminaries}\label{preliminaries}
We start by reviewing the syntax and semantics of Linear Temporal Logic (LTL) and corresponding games.
\begin{subsection}{Linear Temporal Logic}

Consider a set of atomic propositions $P$. LTL formulas are constructed according to the following grammar:
\[\varphi::= p \in P\,\vert\,\neg\,\varphi\,\vert\,\varphi\,\vee\,\varphi\,
\vert\,\Circle\,\varphi\,\vert\,\varphi\, \mathcal{U}\,\varphi.\]
We denote the set $2^P$ by $\Sigma$, where $2^{P}$
is the set of all subsets of $P$. An \emph{infinite word} is an element of
$\Sigma^\omega$ where $\Sigma^\omega$ denotes the set of all infinite strings or words obtained by concatenating elements or letters in $\Sigma$.
We also regard elements $w \in \Sigma^\omega$ as maps $w:\N\to \Sigma$. Using this interpretation we denote $w(i)$ by $w_i$. In the context of LTL, the index $i$ models time and $w_i$ is interpreted as the set of  atomic propositions that hold at time $i$.
 
The semantics of an LTL formula $\varphi$ is described by a satisfaction relation $\models$ that defines when the string $w\in \Sigma^\omega$ satisfies the formula $\varphi$ at time $i\in \N$, denoted by $w,i\models \varphi$:
\begin{itemize}
\setlength\itemsep{0.4mm}
\item For $p \in P$, we have $w, i \models p$ iff $p \in w_i$,
\item $w, i \models \neg \varphi$ iff $w, i \not\models \varphi$,
\item $w, i \models \varphi \vee \psi$ iff $w, i \models \varphi$ or $w, i \models \psi$,
\item $w, i \models \Circle \varphi$ iff $w, i+1 \models \varphi$,
\item $w, i \models \varphi\,\mathcal{U}\,\psi$ iff there exists $k \geq i$ such that $w, k \models \psi$ and for all $i \leq j < k$, we have $w, j \models \varphi$.
\end{itemize}

We use the short hand notation  $\varphi \wedge \psi$, for $\neg (\neg \varphi \vee \neg \psi)$, and \textbf{True} for $\neg \varphi \vee \varphi$. We further abbreviate $\textbf{True}\, \mathcal{U}\, \varphi$ as $\Diamond \varphi$ which means that $\varphi$ \emph{eventually} holds and $\neg \Diamond \neg \varphi$ by $\square \varphi$, which says that $\varphi$ \emph{always} holds. We call the operators $\Circle$, $\mathcal{U}$, $\square$, and $\Diamond$ \emph{temporal operators}.

We write $W(\varphi)$ to denote the set of all infinite words which satisfy $\varphi$, i.e., $W(\varphi):=\{\sigma \in \Sigma^\omega | \sigma \models \varphi\}$. We say that $\psi_1$ and $\psi_2$ are \emph{semantically equivalent},  and write $\psi_1 \equiv~\psi_2$,
if $W(\psi_1)=W(\psi_2)$.

\end{subsection}

\begin{subsection}{Games}
\emph{A game graph} is a tuple $G=(V, E, P, L)$ consisting of:
\begin{itemize}\setlength\itemsep{0.4mm}
\item A finite set $V$ of states partitioned into $V_0$ and $V_1$, i.e., $V=V_0\cup V_1$ and $V_0\cap V_1=\varnothing$;
\item A \emph{transition relation} $E \subseteq V \times V$;
\item A finite set of atomic propositions $P$;
\item A \emph{labeling function} $L : V \to 2^P$ mapping every state in $V$ to the set of atomic propositions that hold true on that state.
\end{itemize}
In this definition, $V_0$ and $V_1$ are the states from which only player $0$ and player $1$ can move, respectively. Thus, the state determines which player can move. We assume that for every state $v \in V$, there exists some $v' \in V$ such that $(v,v') \in E$. The function $L$ can be naturally extended to infinite strings $r \in V^\omega$ by $L(r)=L(r_0)L(r_1)L(r_2)\ldots \in \Sigma^\omega$.

A \emph{play} $r$ in a game graph $G$ is an infinite
sequence of states $r=v_0v_1\ldots\in V^\omega$, such that for all $i \geq 0$,
we have $(v_i, v_{i+1}) \in E$. A strategy for player 0 is a partial function $f: V^* \times V_0 \to V$ such that
whenever $f(r,v)$ is defined $(v, f(r, v)) \in E$.
We denote the set of all plays under strategy $f$ starting from state $v$ by $\Omega_{f,v}(G)$, and the set of all possible plays for a given game graph $G$ by $\Omega(G)$. For a given LTL formula
$\varphi$ and a game graph $G=(V, E, P, L)$,
we use $W_G(\varphi)$ as the short-hand notation for $W(\varphi) \cap L(\Omega(G))$.

For the purposes of this paper, an \emph{LTL game} 
is a pair $(G, \varphi)$ consisting of a game graph $G$, and a \emph{winning condition}
$\varphi$ which is an LTL formula. 
A play $r$ in a game $(G,\varphi)$ is winning for player $0$ if $L(r) \in W(\varphi)$.
A strategy $f$ for player 0 is \emph{winning} from state $v$, if all plays starting in $v$ which follow $f$ are winning for player 0. For a given game $(G, \varphi)$, $\llbracket \varphi \rrbracket_G$
denotes the set of states from which player 0 has a winning strategy,
this is the \emph{winning set} of player 0. When it is clear from the context which game
graph we are referring to, we drop
the subscript and just write $\llbracket \varphi \rrbracket$.

The sets from 
which player 0 can force a visit to a set of states $V'$ is denoted by $\Pre(V')$, i.e.,
\begin{equation*}\Pre\left(V'\right)=\left\{v \in V_0\,\vert\, \exists_{v'\in V'}\,\, (v,v') \in E\right\}
 \cup \left\{v \in V_1 \,\vert\, \forall_{v'\in V}\,\, (v,v') \in E \Rightarrow v' \in V'\right\} \end{equation*}

We introduce the following fixed-point notation for a given monotone mapping $F: 2^V \to 2^V$:
\begin{align*}&\nu X F(X)=\cap_i X_i,\,\text{where}\, X_0=V,\, \text{and},\, X_{i+1}=F(X_i),\, \text{and}\\
&\mu X F(X)=\cup_i X_i,\,\text{where}\, X_0=\varnothing,\, \text{and}\, X_{i+1}=F(X_i).\end{align*}
In other words, $\nu X F(X)$ and $\mu X F(X)$ are the greatest and least fixed-point of the mapping $F$, respectively.

In the rest of the paper, we abuse notation and sometimes use a set of states $V' \subseteq V$
as an LTL formula. In this case $V'$ is to be interpreted as an atomic
proposition that holds only on the states in $V'$. Whenever, $V'$ defines an atomic proposition not in $P$, we can always extend $P$ to contain $V'$. However, for the sake of simplicity we will not explicitly do so.

We call $\varphi$ a \emph{positional formula} if it does not contain
any temporal operators and a \emph{reachability formula}
if $\varphi=\Diamond p$ for some positional formula $p$. We say that $\varphi$ is a \emph{GR(1) formula} if it has the following form:
\begin{equation}\label{Eqn:Gr1}\varphi=\bigwedge\limits_{i_1 \in I_1} \square \Diamond a_{i_1} \implies \bigwedge\limits_{i_2 \in I_2} \square \Diamond g_{i_2},\end{equation}
for some positional formulas $a_{i_1}$, $g_{i_2}$ and finite sets $I_1$ and $I_2$. We call games with winning conditions given as a GR(1) formula \emph{GR(1) games}. We refer the reader to \cite{GR1} for further details on GR(1) formulas.
\end{subsection}

\section{Mode-Target Games}\label{problemformulation}
\begin{subsection}{Motivation}As the automotive technology evolves, conventional cruise control (CCC) is being replaced by adaptive cruise control (ACC). ACC has two \emph{modes} of operation: the speed mode and the time-gap mode. In the speed mode, ACC behaves exactly like CCC, i.e., it reaches a pre-set speed and maintains it. The time-gap mode is what differentiates ACC from CCC. In this mode, ACC keeps pace with the car in front, the \emph{lead car}. This pace is characterized by the \emph{headway}, the quantity that captures the time required by the ACC equipped vehicle to break and avoid a collision when the lead car suddenly slows down. We consider the specifications for ACC set by the International Organization of Standardization (ISO) in \cite{ISOstandard}. Following these specifications, the target region corresponding to the speed mode can be defined as $v \in \{v : \left\vert v - v_{\text{des}}\right\vert \leq \epsilon_v\}$, where $v$, $v_{\text{des}}$ and $\epsilon_v$ denote the velocity of the car, the desired velocity, and the allowable tolerance for the velocity respectively. Similarly, the target region of the time gap mode is formalized as $\tau \in \{\tau : \left\vert \tau-\tau_{\text{des}}\right\vert \leq \epsilon_\tau\}$, where $\tau$ is the headway, $\tau_{\text{des}}$ is the desired headway, and $\epsilon_\tau$ is the desired tolerance for the headway.\footnote{In addition to the mode-target behavior, \cite{ISOstandard} requires the headway to be kept above a certain value regardless of the mode and at all times. However, this is a simple safety specification for which a controller can be synthesized separately and composed with the mode-target controller afterwards.} In each mode, the specification is to reach and stay in the desired target region as long as the current mode does not change. We can express this specification as the conjunction of individual specifications for the time-gap mode and the speed mode, i.e.,  $\varphi_{\text{timegap}} \wedge \varphi_{\text{speed}}$, where:
\begin{align}\label{MTformulaACC}
\varphi_{\text{timegap}}&:=\left(\Diamond \square M_{\text{timegap}} \implies \Diamond \square T_{\text{timegap}}\right),  \\
\label{MTformulaACC2}
\varphi_{\text{speed}}&:=\left( \Diamond \square M_{\text{speed}} \implies \Diamond \square T_{\text{speed}}\right).
\end{align}
Here, $M_{\text{timegap}}$ and $M_{\text{speed}}$ are the atomic propositions that hold whenever the corresponding modes are active. Similarly, $T_{\text{timegap}}$ and $T_{\text{speed}}$ are satisfied when  $\tau \in \{\tau : \left\vert \tau-\tau_{\text{des}}\right\vert \leq \epsilon_\tau\}$ and $v \in \{v : \left\vert v - v_{\text{des}}\right\vert \leq \epsilon_v\}$, respectively. 

Implication \eqref{MTformulaACC} only requires the time gap to be reached if  the system enters and stays in the time gap mode forever. Hence, it seems that a controller may simply ignore the time gap mode if it knows that this mode will be eventually left. However, since we synthesize \emph{causal} controllers, i.e., controllers that cannot foretell the future, any such controller will start driving the system to the time gap target once the system enters the time gap mode. Similarly, once the system leaves the time gap mode to enter the speed mode there is no need to reach the time gap mode anymore and the controller starts driving the system to the speed target. This is consistent with the ACC requirements in the ISO standard [12] that do not require a target to be reached once the corresponding mode is left.

We now consider an engine control example: the control of a combustion engine. As the researchers in the Toyota Technical Center argued in \cite{jyoHSCC}, the specifications for the air-fuel (A/F) ratio controller of an internal combustion engine can be naturally expressed in terms of modes and corresponding targets. We now summarize these specifications given in \cite{jyoHSCC}. There are four different modes of operation: start-up mode, normal mode, power-enrichment mode, and fault mode. Only one of these modes is active at any given time. Furthermore, for each mode there is a required A/F ratio. The specification for the controller is to bring the A/F ratio to this target value and keep it there unless the mode changes.  We compile the target A/F ratios corresponding to each mode in Table~\ref{AFratiotable}, where $\lambda_{\text{ref}}$,
and $\lambda_{\text{ref}}^{\text{pwr}}$ are the optimal A/F ratios for normal and ``full throttle"
driving conditions respectively.
\begin{table}
\caption {The modes and the corresponding target A/F ratios as given in \cite{jyoHSCC}. In this table, $\lambda_{ref}$, $\lambda_{ref}^{pwr}$ correspond to the optimal A/F ratios in normal and power-enrichment mode. The corresponding atomic propositions are written in parentheses.}
\begin{center}
    \begin{tabular}{ | l | l |}
    \hline
    \textbf{Mode} & \textbf{Target A/F Ratio}\\ \hline
    Start-up ($M_{\text{start-up}}$) & $[0.9\lambda_{ref}, 1.1\lambda_{\text{ref}}]\, (T_{\text{start-up}})$ \\ 
    Normal ($M_{\text{normal}}$) & $[0.98\lambda_{ref}, 1.02\lambda_{\text{ref}}]\,  (T_{\text{normal}})$ \\ 
    Power-Enrichment ($M_{\text{power}}$) & $[0.8\lambda^{\text{pwr}}_{\text{ref}}, 1.2\lambda^{\text{pwr}}_{\text{ref}}]\, (T_{\text{power}})$ \\
    Fault ($M_{\text{fault}}$) & $[0.9\lambda_{\text{ref}}, 1.1\lambda_{ref}]\,  (T_{\text{fault}})$  \\
    \hline
    \end{tabular}
\end{center}
\label{AFratiotable}
\end{table}
Defining the atomic propositions for modes and targets according to Table \ref{AFratiotable}, we get the following LTL formula that captures the desired behavior: $\varphi_{\text{start-up}} \wedge \varphi_{\text{normal}} \wedge \varphi_{\text{power}} \wedge \varphi_{\text{fault}}$, where
\begin{align*}\varphi_{\text{start-up}}&:=\left(\Diamond \square M_{\text{start-up}} \implies \Diamond \square T_{\text{start-up}}\right), \\
\varphi_{\text{normal}}&:= \left(\Diamond \square M_{\text{normal}} \implies \Diamond \square T_{\text{normal}}\right), \\
\varphi_{\text{power}}&:=\left(\Diamond \square M_{\text{power}} \implies \Diamond \square T_{\text{power}} \right), \\
\varphi_{\text{fault}}&:=\left(\Diamond \square M_{\text{fault}} \implies \Diamond \square T_{\text{fault}} \right).
\end{align*}

The last example we present is the control of a pressurized water reactor\footnote{A pressurized water reactor is a type of nuclear power plant that constitutes the majority of  nuclear power plants in Western countries, including the US.} during shutdown and start-up stages. Even though the chemical processes that take place in nuclear power plants are well studied under normal conditions, they are still yet to be fully understood in the presence of transient behaviors, particularly during shutdown and start-up.
Therefore, it is important to ensure correct operation during these critical phases. In \cite{nuclearpowerplant}, the authors document the specifications set by \'Electricit\'e de France (EdF) for both of these modes of operation. Here we present a simplified version of these specifications. According to \cite{nuclearpowerplant}, there are two shutdown procedures that can be followed based on the current temperature and concentration of the materials in the plant: hot shutdown and cold shutdown. In the hot shutdown mode, there is a target hydrogen concentration that must be achieved. In the cold shutdown mode, the shutdown can be performed with or without oxygenation depending on factors such as financial cost, risk, and specifics of the power plant. For both of these modes the control objective is to attain and sustain a certain chemical content in the reactor. Table \ref{nuclearpowerplanttable} summarizes these target chemical concentrations corresponding to each operation mode.
\begin{table}
\caption {The modes and the targeted concentration of chemicals in each mode as given in \cite{nuclearpowerplant}. In parentheses, we provide the notation for the atomic propositions corresponding to each mode and target.}
\begin{center}
    \begin{tabular}{ | c | c |}
    \hline
    \textbf{Mode} & \textbf{Target Chemical Content}\\ \hline
    Start-up ($M_{\text{start-up}}$)&Sodium $<$ 0.1 mg/kg \\ &Hydrazine $>$ 0.1 mg/kg ($T_{\text{start-up}}$) 
\\ \rule{0pt}{4ex}    
   Hot shutdown ($M_{\text{hot}}$)  & $15 \text{cm}^3/\text{kg}< \hspace{-0.7mm}\text{H}_2 \hspace{-0.7mm}< 50\text{cm}^3/\text{kg}$\,\,($T_{\text{hot}}$) \\
   \rule{0pt}{4ex}    
    Cold shutdown ($M_{\text{cold}}$) &    $\text{O}_2 > 1\, \text{mg}/\text{kg}$\, ($T_{\text{cold, w / oxy}}$) \hspace{-2mm} \\
   & $\text{H}_2 > 50\, \text{N}\,\text{cm}^3\text{kg}$\, ($T_{\text{cold, w/o oxy}}$) \\
    \hline
    \end{tabular}
\label{nuclearpowerplanttable}
\end{center}
\end{table}
Accordingly, in this case the LTL formula describing the desired behavior is $\varphi_{\text{start-up}} \wedge \varphi_{\text{cold}} \wedge \varphi_{\text{hot}}$, which is conjunction of the specifications for the start-up mode, the hot shutdown mode, and the cold shutdown mode, where
\begin{align*}\varphi_{\text{start-up}}&:=\left( \Diamond \square M_{\text{start-up}} \implies \Diamond \square T_{\text{start-up}}\right), \\
\varphi_{\text{hot}}&:= \left( \Diamond \square M_{\text{hot}} \implies \Diamond \square T_{\text{hot}}\right), \\
\varphi_{\text{cold}}&:= \left( \Diamond \square M_{\text{cold}} \implies \left(\Diamond \square T_{\text{cold, w/ oxy}} \vee \Diamond \square T_{\text{cold, w/o oxy}}\right)\right).
\end{align*}
\end{subsection}
\vspace{-3mm}
\begin{subsection}{Mode-Target Formulas and Games}
The preceding examples illustrate the  scenarios that we want to
capture with a suitable LTL fragment. All of the control problems we just described share
the following properties that define our setting:\\
\textbf{(P1)} There are modes and corresponding targets.\\
\textbf{(P2)} If the system enters a mode, it should reach one of the targets associated with that mode and remain there. \\
\textbf{(P3)} If the mode changes, there is no obligation to reach any of the targets of the previous mode anymore.

We also make the following observation regarding the dynamics of the modes:\\
\textbf{(P4)} There is at most one mode active at any given time.

With these properties in mind, we now formally define mode-target formulas and games. For a game to be a mode-target game, its winning condition must be given by a mode-target formula and the corresponding game graph should have a specific structure capturing \textbf{(P1)}-\textbf{(P4)}.

Let $T$ and $M$ be finite sets of atomic propositions:
$T=\cup_i T_i$ and \linebreak $M=\left\{M_1, M_2,\ \ldots M_{m}\right\}$, where \mbox{$T_i=\{T_{i,1},T_{i,2}, \ldots, T_{i,t_i}\}$.} Here, the $M_i$, $T_{i,j}$ represent the mode $i$, and $j^{th}$ target of mode $i$ respectively. We start with a game graph $G$ labeled with modes and targets, i.e., \mbox{$G=(V, E, M \cup T, L)$} where \mbox{$L: V \to 2^{M \cup T}$.} The winning condition for player 0 is given by a mode-target formula.
\begin{definition}[Mode-Target Formula] An LTL formula is a \emph{mode-target formula} if it has the form
\begin{equation}\label{fragment_old}\varphi:=\bigwedge\limits_{i=1}^m \left(\Diamond \square M_i \implies \bigvee\limits_{j=1}^{t_i}\Diamond \square T_{i,j}\right).\end{equation}
\end{definition}

We can interpret $\varphi$ as: \emph{if the system eventually settles in
$M_i$, then it should eventually settle in one of the modes in $T_i$.} This formula captures \textbf{(P2)}
because it guarantees that the system will reach one of the target regions in $T_i$
if the system stays in mode $M_i$ from a certain time onwards. As we explained previously, the left-hand side of the implication in \eqref{fragment_old} ensures that if the mode changes, the system does not have to reach or stay in any of the corresponding targets of the previous mode, as asserted by \textbf{(P3)}. It is true that $\varphi$ can also be satisfied
by switching between modes infinitely often. However, as it is the case in the
ACC, A/F ratio, and pressurized water reactor examples, the modes can be partially if not fully
determined by an external signal that the controller cannot change.
In these cases, by construction, the controller will make progress
towards the target of the current mode since it cannot predict if the system will remain in the current mode or switch to a different mode. Also note that for the ACC and A/F ratio control examples each $T_i$ is simply a singleton, since there is only one target region that can be reached for all modes. This is not the case, however, for the pressurized water reactor control example.

To address \textbf{(P4)} we make the following assumption on the modes:\\
\textbf{(A)} Modes are mutually exclusive, i.e., $M_i \in L(v)\implies 
M_j \notin L(v), \, \forall j \not=i,\forall v \in V. $

\begin{definition}[Mode-Target Games]
We call LTL games with winning condition given by a mode-target formula and a labeling function $L$ that satisfies
\textbf{(A)}, \emph{mode-target games}.
\end{definition}
\end{subsection}

Note that, a mode-target game is a Streett game \cite{Streett} with additional structure imposed by the assumption \textbf{(A)} on the labeling function.

\section{Solving Mode-Target Games}
\label{directMT}
\subsection{Decomposition of the Winning Set}
We start by introducing a few notions that are critical to
understand the solution of MT games described in this section.

Let $S_1 \subseteq \Sigma^*$ and $S_2 \subseteq \Sigma^* \cup \Sigma^\omega$. We define the concatenation of these sets as 
\begin{equation*}S_1 S_2:=\{\sigma \in \Sigma^* \cup \Sigma^\omega | \sigma=\sigma_1 \sigma_2, \, \sigma_1 \in S_1,\,\sigma_2 \in S_2\}.\end{equation*} A \emph{property} $\Phi$ is a subset of $\Sigma^\omega$.
The set of suffixes of a property $\Phi$ is denoted by $\Post(\Phi)$, i.e.,
$\Post(\Phi):=\left\{\sigma' \in \Sigma^\omega | \sigma\sigma' \in \Phi,\, \text{for some $\sigma \in \Sigma^*$}\right\}.$
A property $\Phi$ is an \emph{absolute liveness property} iff
$\Sigma^*\Phi \subseteq \Phi$. We call $\varphi$ an \emph{absolute liveness formula}
if $W(\varphi)$ is an absolute liveness property. A formula $\varphi$ is an absolute liveness
formula iff $\varphi \equiv \Diamond \varphi$ (see \cite{Sistla}).
It follows that any formula of the form $\Diamond \phi$, for some $\phi$ is an absolute liveness formula. 

We now introduce a class of games that includes both GR(1) games and MT games. The definition of this class of games distills the properties that are essential for a simple and transparent derivation of its solution.

\begin{definition}
An LTL game $(G,\varphi)$ is said to be \emph{simple} if the winning condition defined by $\varphi$ can be written as:
\begin{equation}
\label{Eq:WinningCondition}
\mbox{$\varphi=\square \bigwedge\limits_{i \in I} \varphi_i$}, \qquad \varphi_i=\Diamond p_i \vee \psi_i,
\end{equation}
where $p_i$ is a positional formula and $\psi_i$ is an absolute liveness formula that satisfies:
\begin{equation}
\label{Eq:Inclusion}
W_G(\psi_i) \subseteq W(\varphi).
\end{equation}
\end{definition}

\begin{lemma}
\label{Lemma:GR1toMT}Every GR(1) game is a simple game. 
\end{lemma}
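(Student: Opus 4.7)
The plan is to convert the GR(1) winning condition
\[
\varphi = \bigwedge_{i_1 \in I_1} \square\Diamond a_{i_1} \implies \bigwedge_{i_2 \in I_2} \square\Diamond g_{i_2}
\]
into the simple-game form by a short chain of logical equivalences. First, rewrite the implication as a disjunction, push the negation inward with De Morgan, and distribute the outer conjunction to obtain
\[
\varphi \equiv \bigwedge_{i_2 \in I_2} \left( \square\Diamond g_{i_2} \vee \bigvee_{i_1 \in I_1} \Diamond\square\neg a_{i_1} \right).
\]

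The crux of the argument, and the step I expect to be the main obstacle, is establishing for each $i_2$ the temporal identity
\[
\square\Diamond g_{i_2} \vee \bigvee_{i_1} \Diamond\square\neg a_{i_1} \equiv \square\!\left( \Diamond g_{i_2} \vee \bigvee_{i_1} \Diamond\square\neg a_{i_1} \right).
\]
The forward direction is routine: $\square\Diamond g_{i_2}$ yields $\Diamond g_{i_2}$ at every time, and each $\Diamond\square\neg a_{i_1}$ is a ``tail'' formula, preserved under taking suffixes, so it too holds from every future time. For the reverse direction I plan to argue by contradiction. If the right-hand side holds but the left-hand side fails, then $\Diamond\square\neg g_{i_2}$ and $\square\Diamond a_{i_1}$ hold for every $i_1$. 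Choosing $N$ past which $g_{i_2}$ never occurs, the right-hand side evaluated at time $N$ cannot be satisfied by $\Diamond g_{i_2}$, so some $\Diamond\square\neg a_{i_1}$ must hold from $N$ onward; but that witness also certifies $\Diamond\square\neg a_{i_1}$ from time $0$, contradicting $\square\Diamond a_{i_1}$.

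With the identity in hand, use $\square A \wedge \square B \equiv \square(A \wedge B)$ to pull $\square$ out of the outer conjunction, giving
\[
\varphi \equiv \square \bigwedge_{i_2 \in I_2} \left( \Diamond g_{i_2} \vee \bigvee_{i_1} \Diamond\square\neg a_{i_1} \right),
\]
and set $p_{i_2} := g_{i_2}$ and $\psi_{i_2} := \bigvee_{i_1} \Diamond\square\neg a_{i_1}$. Then $p_{i_2}$ is positional by inspection; each $\Diamond\square\neg a_{i_1}$ is of the form $\Diamond\phi$ and hence absolute liveness by the characterization $\psi \equiv \Diamond\psi$ recalled above, and the class of absolute liveness properties is closed under finite union (straight from the definition $\Sigma^*\Phi \subseteq \Phi$), so $\psi_{i_2}$ is absolute liveness. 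Finally, the required inclusion $W_G(\psi_{i_2}) \subseteq W(\varphi)$ is immediate; indeed, $W(\psi_{i_2}) \subseteq W(\varphi)$ holds regardless of $G$, because any word satisfying $\psi_{i_2}$ falsifies some $\square\Diamond a_{i_1}$, which in turn makes the GR(1) antecedent false and $\varphi$ vacuously true.
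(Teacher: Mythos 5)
Your proof is correct and follows essentially the same route as the paper's: both rewrite the GR(1) formula as $\square \bigwedge_{i_2 \in I_2}\bigl(\Diamond g_{i_2} \vee \bigvee_{i_1 \in I_1} \Diamond\square\neg a_{i_1}\bigr)$, take $p_{i_2}=g_{i_2}$ and $\psi_{i_2}=\bigvee_{i_1}\Diamond\square\neg a_{i_1}$, and justify the inclusion $W_G(\psi_{i_2})\subseteq W(\varphi)$ by noting that $\psi_{i_2}$ falsifies the GR(1) antecedent. The only (cosmetic) difference is that you prove the key commutation of $\square$ with the disjunction by a direct semantic suffix argument, whereas the paper invokes the syntactic equivalences of its Lemma~\ref{syntacticequiv}.
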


\begin{proof} See Appendix \ref{appendix3}. 
\end{proof}

The proof of Lemma \ref{Lemma:GR1toMT} relies on showing that any GR(1) formula can be written in the following form:
\begin{equation}\label{GR1simple}\square\bigwedge\limits_{i_2 \in I_2} \left(\Diamond g_{i_2} \vee \left(\bigvee\limits_{i_1 \in I_1} \Diamond \square \neg a_{i_1}\right) \right).
\end{equation}
The formula in \eqref{GR1simple} satisfies the properties required by the winning condition of simple games given in \eqref{Eq:WinningCondition} and \eqref{Eq:Inclusion}, where $g_{i_2}$ is the positional formula $p_i$ and $\left(\bigvee\limits_{i_1 \in I_1} \Diamond \square \neg a_{i_1}\right)$ is $\psi_i$. The inclusion in \eqref{Eq:Inclusion} is also fulfilled since we have
\begin{equation*}
W\left(\bigvee\limits_{i_1 \in I_1} \Diamond \square \neg a_{i_1}\right) \subseteq W\left(\left( \square\bigwedge\limits_{i_2 \in I_2} \left(\Diamond g_{i_2} \vee \left(\bigvee\limits_{i_1 \in I_1} \Diamond \square \neg a_{i_1}\right) \right) \right)\right).
\end{equation*}

\begin{lemma}\label{lemma:mdfragment}Every mode-target game is simple.
\end{lemma}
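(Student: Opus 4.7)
The plan is to exhibit a decomposition $\varphi \equiv \square \bigwedge_i (\Diamond p_i \vee \psi_i)$ as required by \eqref{Eq:WinningCondition}, in the same spirit as the rewriting \eqref{GR1simple} used for Lemma~\ref{Lemma:GR1toMT}. Writing $Q_i := \square \Diamond \neg M_i$ and $R_i := \bigvee_j \Diamond \square T_{i,j}$, the first step is to turn each implication in \eqref{fragment_old} into a disjunction, giving
\begin{equation*}
\varphi \equiv \bigwedge_{i=1}^m (Q_i \vee R_i).
\end{equation*}
The natural attempt is then to set $p_i := \neg M_i$ and $\psi_i := R_i$; both have the right syntactic type, and since $Q_i$ and $R_i$ depend only on the tail of the word (they are invariant under any shift of the time origin), a short case analysis pushing $\square$ inside and outside the Boolean connectives already yields the equivalence $\varphi \equiv \square \bigwedge_i (\Diamond p_i \vee R_i)$.

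The trouble, and what I expect to be the main obstacle, is that this naive $\psi_i = R_i$ does not satisfy the inclusion \eqref{Eq:Inclusion}: nothing in the definition of $R_i$ prevents a play of $G$ from eventually staying in some $T_{i,j}$ while simultaneously staying in a different mode $M_k$ ($k \neq i$) whose own targets are never reached, so on such a play $R_i$ holds but $\varphi$ fails. The fix is to split $R_i$ via the Boolean tautology $Q_i \vee R_i \equiv Q_i \vee (R_i \wedge \neg Q_i)$; since $\neg Q_i \equiv \Diamond \square M_i$ and $\Diamond \square A \wedge \Diamond \square B \equiv \Diamond \square (A \wedge B)$, this lets me redefine
\begin{equation*}
\psi_i := R_i \wedge \Diamond \square M_i \;\equiv\; \bigvee_{j=1}^{t_i} \Diamond \square (T_{i,j} \wedge M_i).
\end{equation*}
This $\psi_i$ is still an absolute liveness formula and still invariant under time shifts, so exactly the same case analysis as before delivers the desired normal form $\varphi \equiv \square \bigwedge_i (\Diamond p_i \vee \psi_i)$.

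The final step is to verify $W_G(\psi_i) \subseteq W(\varphi)$ for this stronger $\psi_i$. If a play $r$ in $G$ satisfies $\psi_i$, then $L(r) \models \Diamond \square (T_{i,j_0} \wedge M_i)$ for some $j_0$, so both $\Diamond \square M_i$ and $\Diamond \square T_{i,j_0}$ hold, and the implication for mode $i$ in $\varphi$ is immediate. For every $k \neq i$, the mode-exclusivity assumption \textbf{(A)} forbids $M_i$ and $M_k$ from ever labeling the same state of $G$, so $\Diamond \square M_i$ and $\Diamond \square M_k$ cannot simultaneously hold on any play of $G$, which makes the $k$-th implication vacuously true. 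This is precisely the step where assumption \textbf{(A)} enters, and it is what turns the game-graph-relative inclusion $W_G(\psi_i) \subseteq W(\varphi)$ into a true statement even though the corresponding global inclusion $W(\psi_i) \subseteq W(\varphi)$ would fail.
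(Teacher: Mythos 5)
Your proof is correct and follows essentially the same route as the paper's: you arrive at the identical decomposition $p_i=\neg M_i$, $\psi_i=\bigvee_{j=1}^{t_i}\Diamond\square(M_i\wedge T_{i,j})$, your strengthening of $R_i$ via $Q_i\vee R_i\equiv Q_i\vee(R_i\wedge\neg Q_i)$ is exactly the content of the paper's Lemma~\ref{syntacticequiv2}, your pushing of $\square$ through the disjunction with a shift-invariant disjunct is Lemma~\ref{syntacticequiv}, and your use of assumption \textbf{(A)} to verify $W_G(\psi_i)\subseteq W(\varphi)$ matches the paper's argument. The only (welcome) difference is expository: you motivate why the naive choice $\psi_i=R_i$ must be strengthened, which the paper leaves implicit.
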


\begin{proof}See Appendix \ref{app2proof1}. 
\end{proof}

We prove Lemma \ref{lemma:mdfragment} by showing that every MT formula can be written as:
\begin{equation}\label{MTsimple}\square \bigwedge \limits_{i=1}^m\left(\Diamond \neg M_i \vee \left(\bigvee\limits_{j=1}^{t_i}\Diamond \square (M_i \wedge T_{i,j})\right)\right).
\end{equation}
Note that \eqref{MTsimple} is in the form defined by \eqref{Eq:WinningCondition} and \eqref{Eq:Inclusion}, where the positional formula $p_i$ is $\neg M_i$ and formula $\psi_i$ is $\bigvee\limits_{j=1}^{t_i}\Diamond \square (M_i \wedge T_{i,j})$.


The winning condition for simple games can be written as a conjunction of formulas $\varphi_i$ preceded by $\square$ where each $\varphi_i$ can be decomposed as a disjunction between a reachability formula and a formula $\psi$ satisfying (\ref{Eq:Inclusion}). We now show that it is easy to modify algorithms that synthesize winning strategies for reachability games to obtain an algorithm for a conjunction of reachability formulas preceded by $\square$. The approach in this algorithm remains valid even when we disjoin these reachability formulas with absolute liveness formulas $\psi_i$'s, in virtue of~(\ref{Eq:Inclusion}). The inclusion given in~(\ref{Eq:Inclusion}) ensures that a play in $(G, \psi_i)$ that is winning for player $0$, is also winning in $(G, \varphi)$. Therefore, one can adopt a compositional approach to the solution of simple games. A small modification to an algorithm that computes $\llbracket \varphi_i \rrbracket$ leads to an algorithm computing $\llbracket \square \bigwedge_{i\in I}\varphi_i \rrbracket$. The next result makes these ideas precise.

\begin{theorem}\label{theorem:compsound}The winning set for player $0$ in a simple game $(G,\varphi)$ is given by
\begin{equation}
\label{Eq:SimpleSolution}
\left\llbracket \varphi \right\rrbracket = \nu Z \bigcap\limits_{i \in I} \left\llbracket\psi_i \vee \Diamond (p_i \wedge \Circle Z) \right\rrbracket.
\end{equation}
\end{theorem}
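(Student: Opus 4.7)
The plan is to prove the two inclusions of equation~(\ref{Eq:SimpleSolution}) separately, in the standard style of a greatest-fixpoint characterization. Let $F(Z) := \bigcap_{i \in I} \llbracket \psi_i \vee \Diamond(p_i \wedge \Circle Z) \rrbracket$ and $Z^\ast := \nu Z\, F(Z)$.

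For the inclusion $\llbracket \varphi \rrbracket \subseteq Z^\ast$, I show that $\llbracket \varphi \rrbracket$ is a post-fixed point of $F$, i.e.\ $\llbracket \varphi \rrbracket \subseteq F(\llbracket \varphi \rrbracket)$, and invoke Knaster--Tarski. Fix $v \in \llbracket \varphi \rrbracket$ with a winning strategy $f$, and fix $i \in I$; I claim that the same $f$ witnesses $v \in \llbracket \psi_i \vee \Diamond(p_i \wedge \Circle \llbracket \varphi \rrbracket) \rrbracket$. Indeed, any play $r$ from $v$ under $f$ satisfies $\varphi = \square \bigwedge_j \varphi_j$, hence $L(r), 0 \models \varphi_i$. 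If $L(r), 0 \models \psi_i$ we are done; otherwise some $k$ has $L(r), k \models p_i$, and it remains to verify $r_{k+1} \in \llbracket \varphi \rrbracket$. This is a direct consequence of the suffix-closure of $\square$-formulas: the history-shifted strategy $f'(\sigma, u) := f(r_0 \ldots r_{k+1}\sigma, u)$ is winning for $\varphi$ from $r_{k+1}$.

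For the reverse inclusion $Z^\ast \subseteq \llbracket \varphi \rrbracket$, I construct an explicit round-robin winning strategy from any $v \in Z^\ast$. Using $Z^\ast = F(Z^\ast)$, pick for every $u \in Z^\ast$ and every $i \in I$ a witness strategy $g_i^u$ from $u$ that ensures $\psi_i \vee \Diamond(p_i \wedge \Circle Z^\ast)$. The composite strategy $f^\ast$ maintains a memory $(c, u) \in I \times Z^\ast$ initialized to $(1, v)$: it plays according to $g_c^u$ until reaching a position where $p_c$ holds and the next state lies in $Z^\ast$, at which point it increments $c$ modulo $|I|$, updates $u$ to the new current state, and continues with the refreshed $g_c^u$. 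For any resulting play $r$ there are two cases. Either every round eventually completes, in which case $p_i$ holds infinitely often for every $i$, giving $\square \Diamond p_i$ and hence $\square \varphi_i$ for every $i$, i.e.\ $\varphi$. Or some round $c$ never completes, so from some position $\ell$ onward $f^\ast$ follows $g_c^{r_\ell}$ forever and $\Diamond(p_c \wedge \Circle Z^\ast)$ never holds; the guarantee of $g_c^{r_\ell}$ then forces $L(r), \ell \models \psi_c$. Absolute liveness of $\psi_c$, i.e.\ $\Sigma^\ast W(\psi_c) \subseteq W(\psi_c)$, lifts this to $L(r) \in W(\psi_c)$, and the simple-game hypothesis~(\ref{Eq:Inclusion}) yields $L(r) \in W(\varphi)$.

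The main obstacle I anticipate is the soundness direction, specifically the bookkeeping for the round-robin strategy together with the lift from a late-appearing $\psi_c$ at position $\ell$ back to $\psi_c$ at position $0$ via absolute liveness; this step is precisely where the simple-game hypothesis~(\ref{Eq:Inclusion}) is essential, and where the argument distinguishes simple games from general Streett games. The post-fixed-point direction is by contrast essentially a one-line consequence of the suffix-closure of $\square$-formulas.
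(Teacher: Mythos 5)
Your proposal is correct and takes essentially the same route as the paper's own proof: the completeness inclusion is obtained by showing $\llbracket \varphi \rrbracket$ is a post-fixed point of $F$ via the suffix-closure (stability) of $\square$-formulas and invoking Knaster--Tarski, and the soundness inclusion uses the same round-robin composite strategy, with absolute liveness of $\psi_c$ plus the hypothesis~(\ref{Eq:Inclusion}) covering the case of a round that never completes. The only cosmetic difference is that the paper packages the post-fixed-point step through Lemma~\ref{always2} (identifying $\llbracket \varphi \rrbracket$ with the set of states visited under a winning strategy), which is the same argument as your history-shifted strategy.
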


\begin{proof}See Appendix \ref{soundness_app}.
\end{proof}

The proof of the first part of Theorem \ref{theorem:compsound}, follows the existing methods for constructing winning strategies for Generalized B{\"u}chi games \cite{GeneralizedBuchi}, in which the winning condition is given by
\begin{equation}
\label{GeneralizedBuchi}
\bigwedge\limits_{i \in I} \square \Diamond B_i \equiv \square \bigwedge_{i \in I}\Diamond B_i,
\end{equation}
for some subset of states $B_i \subseteq V$. The winning condition we are interested in, given in~(\ref{Eq:WinningCondition}), is slightly different from the one given in \eqref{GeneralizedBuchi} due to the additional $\psi_i$ term. However, inclusion \eqref{Eq:Inclusion} ensures that any play that is winning for $(G, \psi_i)$ is also winning for $(G, \varphi)$. Hence, by simply computing \mbox{$\nu Z \bigcap_{i \in I} \left\llbracket \psi_i \vee \Diamond (p_i \land \Circle Z)\right\rrbracket$} we can obtain a winning strategy for player $0$ in a simple game. Moreover, this strategy can be seen as the composition of the strategies for games with the simpler winning condition $\psi_i \vee \Diamond (p_i \land \Circle Z)$.

Theorem~\ref{theorem:compsound} shows how the structure of simple games makes it possible to combine the sets $\llbracket \psi_i \vee \Diamond p_i \rrbracket$ as in \eqref{Eq:SimpleSolution} to compute the final winning set. In particular, we conclude that modularity observed in the solution of GR(1) games is not due to the structure of GR(1) formulas but rather to the structure of simple game formulas. Hence, this structure can be leveraged beyond GR(1) games as we did for MT games. Note how Theorem~\ref{theorem:compsound} describes the solution to both GR(1) and MT games. For later reference we instantiate~(\ref{Eq:SimpleSolution}) for MT games:
\begin{equation}
\label{Eq:MTSolution}
\left\llbracket \varphi \right\rrbracket =\nu Z \bigcap\limits_{i=1}^m \left\llbracket \bigvee\limits_{j=1}^{t_i} \Diamond \square (M_i \wedge T_{i,j}) \vee \Diamond (\neg M_i \wedge \Circle Z) \right\rrbracket
\end{equation}
and explain in the next section how to compute the winning sets
\begin{equation}\label{winningsetMT}\left\llbracket \bigvee\limits_{j=1}^{t_i} \Diamond \square (M_i \wedge T_{i,j}) \vee \Diamond (\neg M_i \wedge \Circle Z) \right\rrbracket\end{equation}
so as to make use of~(\ref{Eq:MTSolution}). Note that if we instead instantiate~(\ref{Eq:SimpleSolution}) for the GR(1) formula~(\ref{Eqn:Gr1}) we obtain
\begin{equation}
\label{winningGR1}
\nu Z \bigcap\limits_{i_2 \in I_2} \left\llbracket \bigvee\limits_{i_1 \in I_1} \Diamond \square \neg a_{i_1} \vee \Diamond (\neg g_{i_2} \wedge \Circle Z) \right\rrbracket.
\end{equation}

The structures of the fixed-point expressions given in \eqref{winningGR1} and \eqref{Eq:MTSolution} are very much alike, but not the same. While 
in GR(1) games for each $i_2 \in I_2$, i.e., for each guarantee, the same persistency property is required to be satisfied ($\lor_{i_i\in I_1}\Diamond\Box\neg a_{i_1}$), in the case of MT games, the persistency part of the specification depends on the current mode, i.e., the index $i$, as in \eqref{Eq:MTSolution} ($\lor_{j=1}^{t_i} \Diamond \square (M_i \wedge T_{i,j})$).


\subsection{Computation of the Winning Set}
\label{compWinningSet}
In \cite{Kesten}, Kesten, Piterman and Pnueli presented a $\mu$-calculus formula which characterizes $\left\llbracket \vee_{i \in I} \Diamond \square p_i \vee \Diamond q\right\rrbracket$, where $p_i$ and $q$ are positional formulas.
This $\mu$-calculus formula yields the following fixed-point expression:
\begin{equation}\label{Eq:KestenSln}\mu Y \bigcup\limits_{i \in I} \left(\nu X( \Pre(X) \cap \llbracket p_i \rrbracket) \cup \llbracket q \rrbracket \cup \Pre(Y)\right).\end{equation}
Using \eqref{Eq:KestenSln} it is easy to see that the winning set~(\ref{Eq:MTSolution}) is given by the following fixed-point:
\begin{align}\label{finalfixed-point}
\llbracket\varphi\rrbracket=\nu Z \left(\bigcap\limits_{i =1}^m \mu Y \bigcup\limits_{j=1}^{t_i}\left(\nu X (\Pre(X) \cap \llbracket M_i \land T_{i,j} \rrbracket\right)\right.
 \left.\cup \vphantom{\bigcap\limits_{i=1}^m}\hspace{0mm}\left(\llbracket \neg M_i \rrbracket \cap \Pre(Z)) \cup \Pre(Y)\right)\right).
\end{align}

We refer to the algorithm defined by the iterative computation of the preceding fixed-point as the \textbf{MT} algorithm. In the worst case, the \textbf{MT} Algorithm can take $O(\sum_i t_i n^2 )$ iterations, where $t_i$ is the number of targets dedicated to mode $i$ and $n$ is the number of vertices in the game graph $G$. We summarize this in the following theorem.

\begin{theorem}\label{timecomplexity2}Mode-target games can be solved by the symbolic algorithm \textbf{MT} requiring $O(\sum_{i=1}^m t_i\, n^2)$ $\Pre$ computations.
\end{theorem}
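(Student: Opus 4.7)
My plan is to prove the theorem in two halves: first correctness of the algorithm, then a careful bookkeeping argument for the Pre-count.

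Correctness follows almost immediately from material already in the paper. Equation \eqref{Eq:MTSolution} instantiates Theorem~\ref{theorem:compsound} for the simple-game form of an MT winning condition, and applying the Kesten--Piterman--Pnueli identity \eqref{Eq:KestenSln} to each inner set (using the elementary identification $\llbracket \neg M_i \wedge \Circle Z \rrbracket = \llbracket \neg M_i \rrbracket \cap \Pre(Z)$) yields exactly the nested expression \eqref{finalfixed-point}. The MT algorithm is, by definition, the bottom-up iterative evaluation of this nested fixed-point, so it computes $\llbracket \varphi \rrbracket$.

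For the complexity bound the crucial structural observation is that the innermost greatest fixed-point $\nu X\bigl(\Pre(X) \cap \llbracket M_i \wedge T_{i,j} \rrbracket\bigr)$ contains no occurrence of the outer variables $Y$ or $Z$. I would hoist it out of both enclosing loops and precompute $A_{i,j}$ once for each pair $(i,j)$. Since $A_{i,j}$ is the limit of a descending chain in $2^V$, the iteration stabilizes after at most $n$ steps, each costing one Pre call; the entire precomputation therefore uses $O\bigl(n \sum_i t_i\bigr)$ Pre operations. One can likewise precompute $B_i := \bigcup_{j=1}^{t_i} A_{i,j}$ with no additional Pre work.

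After the precomputation the algorithm just iterates $\nu Z \bigcap_{i=1}^m \mu Y \bigl( B_i \cup (\llbracket \neg M_i \rrbracket \cap \Pre(Z)) \cup \Pre(Y)\bigr)$. The outer $\nu Z$ converges in at most $n$ steps, and inside each $Z$-iteration a single $\Pre(Z)$ can be shared across all $m$ modes; for every mode $i$ the middle $\mu Y$ converges in at most $n$ steps, each charging one Pre call for $\Pre(Y)$. This produces $O(m n^2)$ Pre operations in the main loop. Combining with the precomputation and using $m \le \sum_i t_i$ (every mode has at least one target) gives the claimed bound $O\bigl(\sum_{i=1}^m t_i\, n^2\bigr)$.

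The only place this argument is delicate is the hoisting step; I would verify explicitly that $Y$ and $Z$ do not appear free in $\Pre(X) \cap \llbracket M_i \wedge T_{i,j} \rrbracket$, which is immediate from the syntactic form of \eqref{finalfixed-point} but essential: without that decoupling, naively re-running each $\nu X$ inside every $Y$- and $Z$-iteration would inflate the estimate by two extra factors of $n$. No lower bound is needed, since the theorem asserts only the upper bound.
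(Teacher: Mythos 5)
Your proof is correct, but it takes a genuinely different route from the paper's. The paper's own argument is a two-line appeal to the result of Browne et al.\ that a fixed-point expression of alternation depth $k$ converges in $O(n^{\lfloor 1+k/2\rfloor})$ iterations: since \eqref{finalfixed-point} has alternation depth three, each of the $\sum_i t_i$ inner subexpressions contributes $O(n^2)$ symbolic steps, giving $O(\sum_i t_i\, n^2)$. You instead give a direct structural accounting: hoist the innermost $\nu X\bigl(\Pre(X)\cap\llbracket M_i\wedge T_{i,j}\rrbracket\bigr)$ out of the $Y$- and $Z$-loops, precompute the sets $A_{i,j}$ in $O(n\sum_i t_i)$ $\Pre$ calls, and then run the remaining two-level $\nu Z\,\mu Y$ iteration in $O(mn^2)$ calls. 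This is more elementary (no alternation-depth machinery) and in fact yields the sharper bound $O(n\sum_i t_i + mn^2)$. One caveat you should address explicitly: the parenthesization in \eqref{finalfixed-point} is ambiguous, and the strategy-synthesis appendix indexes the inner fixed points as $X_{i,j}^{*\ell*}$, i.e.\ by the $Y$-iteration, which indicates that in the algorithm the paper actually implements the $\nu X$ scopes over $\Pre(Y)$ and $\llbracket\neg M_i\rrbracket\cap\Pre(Z)$ as well (mirroring the GR(1) fixed point). So your hoisting is not ``immediate from the syntactic form''; it changes the intermediate $Y$-iterates. It is still sound, because the hoisted and non-hoisted expressions have the same limit --- $\mu Y$ of either body computes the attractor of $\bigcup_j \nu X(\Pre(X)\cap\llbracket M_i\wedge T_{i,j}\rrbracket)\cup(\llbracket\neg M_i\rrbracket\cap\Pre(Z))$, by the same residual-strategy argument underlying \eqref{Eq:KestenSln} --- but you should say this, since otherwise you have bounded the cost of a variant algorithm rather than of \textbf{MT} as defined. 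Your correctness half is fine as a citation of Theorem~\ref{theorem:compsound} and \eqref{Eq:KestenSln}, which is exactly what the paper does.
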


\begin{proof}In \cite{Browne} Browne et al. show that a fixed point expression with \emph{alternation depth} $k$ can be computed in $O(n^{\lfloor1+k/2\rfloor})$ iterations. Note that given a fixed-point expression the alternation depth is simply the number of alternating greatest and least fixed point operators.

The alternation depth of the fixed-point expression \eqref{finalfixed-point} is three. Moreover, the computation of the fixed-point involves sequentially evaluating $t_i$ fixed-point expressions for each mode, which results in $O\left(\sum_{i=1}^m t_in^2\right)$ $\Pre$ computations in the worst case.
\end{proof}

Theorem \eqref{timecomplexity2} only addresses the computation of the winning set for the controller. However, the fixed-point computation given in \eqref{finalfixed-point} is constructive in the sense that we can find a winning strategy by storing the intermediate sets that are computed during its evaluation. The precise construction and implementation of the winning strategy follows the same approach as in GR(1) games \cite{GR1}. For the sake of completeness we provide the details of the winning strategy synthesis in Appendix \ref{synthesis}. Note that contrary to the winning strategy for GR(1) games, the winning strategy for MT games is memoryless since player 0 only needs to know what the current mode is.

\section{Solving Mode-Target Games via GR(1) Games}\label{algorithmsection}
In this section, we describe how to transform a given MT game into a GR(1) game, thereby obtaining another algorithm to solve MT games that is based on the existing synthesis algorithms for the GR(1) fragment. To simplify the notation in the next proposition we introduce the atomic proposition $\bar{T}_{i,j}$ defined by:
\begin{equation*}
\bar{T}_{i,j} =
\begin{cases}
T_{i,j},& \text{if $j \leq t_i$}\\
\mathtt{false} & \text{otherwise}.
\end{cases}
\end{equation*}
\begin{proposition}\label{prop_embed1}Every MT game with game graph $G$ is equivalent to the GR(1) game $(G,\varphi)$, where
\begin{equation}
\label{Eq:Emb1}
\varphi=\left(\bigwedge\limits_{j=1}^{\max_i{t_i}} \square  \Diamond \wedge_{i=1}^m (\neg M_i \vee \neg \bar{T}_{i,j})\hspace{-1mm}\right) \hspace{-2mm}\implies\hspace{-2mm} \left(\bigwedge\limits_{i=1}^m \square \Diamond \neg M_i\right)\hspace{-1mm},
\end{equation}
\end{proposition}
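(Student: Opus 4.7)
My plan is to prove the semantic equivalence of the MT winning condition with $\varphi$ on every play of $G$, from which equivalence of the two games is immediate. First I verify that $\varphi$ is indeed a GR(1) formula in the sense of \eqref{Eqn:Gr1}: setting $a_{j} := \bigwedge_{i=1}^m (\neg M_i \vee \neg \bar{T}_{i,j})$ and $g_i := \neg M_i$, both are positional, so $\varphi$ has the right shape.

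I then rewrite both winning conditions into directly comparable form. Using $\Diamond \square M_i \equiv \neg \square \Diamond \neg M_i$, the MT formula \eqref{fragment_old} becomes
\[
\bigwedge_{i=1}^m \left( \square \Diamond \neg M_i \;\vee\; \bigvee_{j=1}^{t_i} \Diamond \square T_{i,j}\right),
\]
while rewriting $\varphi$ as the disjunction of the negated antecedent and the consequent yields
\[
\left(\bigvee_{j=1}^{\max_i t_i}\Diamond \square \bigvee_{i=1}^m (M_i \wedge \bar{T}_{i,j})\right) \;\vee\; \bigwedge_{i=1}^m \square \Diamond \neg M_i.
\]

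I then case-split using the mode mutual-exclusion assumption \textbf{(A)}, which guarantees that on any play in $G$ at most one mode satisfies $\Diamond \square M_i$. In the first case, no mode is eventually always active; then $\bigwedge_i \square \Diamond \neg M_i$ holds, making $\varphi$ true, and simultaneously every premise $\Diamond \square M_i$ in the MT formula is false, so that formula also holds. In the second case, a unique index $i^*$ satisfies $\Diamond \square M_{i^*}$. The consequent of $\varphi$ then fails, so $\varphi$ reduces to its negated antecedent. By \textbf{(A)}, for sufficiently large time only $M_{i^*}$ is ever true, hence $\Diamond \square \bigvee_i (M_i \wedge \bar{T}_{i,j}) \equiv \Diamond \square (M_{i^*} \wedge \bar{T}_{i^*,j})$; disjuncts with $j > t_{i^*}$ vanish because $\bar{T}_{i^*,j}=\mathtt{false}$, and for $j \le t_{i^*}$ the factor $M_{i^*}$ is subsumed by $\Diamond \square M_{i^*}$, leaving $\bigvee_{j=1}^{t_{i^*}} \Diamond \square T_{i^*,j}$. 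On the MT side, all conjuncts with $i \neq i^*$ hold trivially, and the $i^*$ conjunct collapses to the same $\bigvee_{j=1}^{t_{i^*}} \Diamond \square T_{i^*,j}$.

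The main obstacle is the bookkeeping in the second case: one must carefully use mutual exclusion to discard the $M_i$ with $i \neq i^*$ from the inner disjunction, and argue that the padding $\bar{T}_{i,j}=\mathtt{false}$ for $j > t_i$ prevents the extra index range $j \in \{t_{i^*}+1,\dots,\max_i t_i\}$ appearing in $\varphi$ from introducing spurious obligations. Once this is established, the two winning conditions coincide on every play of $G$, so the two games share the same set of winning plays and hence the same winning set from every state.
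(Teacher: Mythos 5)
Your proof is correct, but it takes a genuinely different route from the paper's. The paper proceeds by formula manipulation in two syntactic stages: it first uses Lemma~\ref{syntacticequiv2} to replace $\Diamond\square T_{i,j}$ by $\Diamond\square(M_i\wedge T_{i,j})$ and Lemma~\ref{implicationProof} (whose hypothesis is discharged by assumption \textbf{(A)}) to collapse the conjunction of implications into the single GR(1) formula $\varphi_1$ of \eqref{firstGR1} with $\sum_i t_i$ assumptions; it then shows, via the observation \eqref{dis_eventuallyalways} that a word satisfying $\square\bigvee_i p_i$ either settles into one $p_i$ or shuffles between at least two, that the inner disjunction can be pushed under $\Diamond\square$, yielding \eqref{Eq:Emb1}. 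You instead argue pointwise on plays: using \textbf{(A)} you observe that on any play of $G$ at most one index $i^*$ can satisfy $\Diamond\square M_{i^*}$, and in each of the two resulting cases you reduce both winning conditions to the same formula ($\mathbf{True}$ in the toggling case, $\bigvee_{j=1}^{t_{i^*}}\Diamond\square T_{i^*,j}$ in the settling case). Your case analysis is the semantic content of the paper's remark that $\varphi_1$ is satisfied either by settling into a mode and one of its targets or by toggling between modes forever, and your handling of the padding $\bar{T}_{i,j}=\mathtt{false}$ and of the absorption of $M_{i^*}$ into $\Diamond\square M_{i^*}$ is exactly right. What the paper's route buys is the reusable intermediate GR(1) formula $\varphi_1$ (which itself yields a synthesis algorithm, with a different assumption count) and a toolbox of lemmas used elsewhere; what your route buys is a shorter, self-contained argument that makes the role of mutual exclusion completely transparent. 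One point worth making explicit in a write-up: the equivalence you establish holds only on words in $L(\Omega(G))$, where \textbf{(A)} is guaranteed, not on all of $\Sigma^\omega$ --- you scope your conclusion correctly to plays of $G$, so this is a matter of emphasis rather than a gap.
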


\begin{proof} See Appendix \ref{embedding1}.
\end{proof}

The proof of \eqref{Eq:Emb1} has two main steps. In the first step, we show that the MT game is equivalent to the GR(1) game $(G, \varphi_1)$, where
\begin{equation}\label{firstGR1}\varphi_1=\left(\bigvee\limits_{i=1}^{m}\bigvee\limits_{j=1}^{\max_i t_i} \Diamond \square (M_i \wedge \bar{T}_{i,j}) \right) \vee \left(\bigwedge\limits_{i=1}^m \square \Diamond \neg M_i\right).
\end{equation}
The equivalence of $(G, \varphi)$ to the MT game relies on assumption \textbf{(A)}. Also note that the formula in \eqref{firstGR1} is satisfied either when the system settles down in a mode and in one of the corresponding targets or when it toggles between the modes indefinitely, which matches the initial motivation of the MT fragment. Since the formula given in \eqref{firstGR1} is a GR(1) formula with $\sum_i t_i$ assumptions and $m$ guarantees, this part of the proof already leads to a synthesis algorithm for MT games. In the second part of the proof we show\footnote{This part of the proof is based on a comment we received from an anonymous reviewer of the preliminary version of our results presented in \cite{ADHSBalkan}.} how to construct a GR(1) game with fewer assumptions that is equivalent to $(G,\varphi_1)$ and for which the statement of Proposition \ref{prop_embed1} holds. Again assumption \textbf{(A)} lies at the heart of the proof. This assumption restricts the modes to be mutually exclusive and therefore enforces additional structure on MT games, which lets us simplify the formula in \eqref{firstGR1}.

The formula given in \eqref{Eq:Emb1} is a GR(1) formula with $\max_i t_i$
assumptions, and $m$ guarantees. Notice that this formula has at most the same number of assumptions as $\varphi_1$ since $m \max_i t_i \leq m \sum_i t_i$. Due to Proposition \ref{prop_embed1} we can now simply apply the algorithm given in \cite{GR1} to the game graph $G$ with the winning condition \eqref{Eq:Emb1} to solve the MT game. This algorithm is based on the computation of the following fixed-point:
\begin{align}\label{Fixpoint:Emb1}
\nu Z\hspace{-0.6mm} \left(\bigcap\limits_{i=1}^m\hspace{-0.6mm}  \mu Y \hspace{-0.9mm}  \left(\bigcup\limits_{j=1}^{\max_i{t_i}}\hspace{-0.6mm} \nu X\hspace{-0.6mm} \left(\Pre(X) \cap\hspace{-0.2mm} \cup_{\ell=1}^m \llbracket M_\ell \wedge \bar{T}_{\ell,j}\rrbracket\right)\right.\right. \left. \left.\phantom{\bigcup\limits_j^k}\hspace{-6.7mm}  \cup \hspace{-0.2mm}\Pre(Y) \cup \hspace{-0.5mm} \left(\llbracket \neg M_i \rrbracket \cap \Pre(Z)\right) \right)\hspace{-1mm}\right)\hspace{-0.7mm}.
\end{align}

We refer to the algorithm defined by the iterative computation of the preceding fixed-point as the \textbf{GR(1)-Emb} algorithm for \emph{GR(1) Embedding}. In the worst case, the \textbf{GR(1)-Emb} algorithm can take $O(m\max_i t_in^2)$ iterations, where $m$ is the number of modes in the MT formula, $t_i$ is the number of targets dedicated to mode $i$, and $n$ is the number of vertices in the game graph $G$. This follows from the fact that solving GR(1) games according to the fixed-point computation in~\cite{GR1} takes $O(n_an_gn^2)$ symbolic steps where $n_g$ is the number of guarantees and $n_a$ is the number of assumptions. Then, the bound $O(m\max_i t_i n^2)$ follows from the fact that $n_a=\max t_i$ and $n_g=m$ as in \eqref{Eq:Emb1}.

The following result summarizes the discussion in this section.

\begin{theorem}\label{timecomplexity}Mode-target games can be solved by the symbolic algorithm \textbf{GR(1)-Emb} requiring $O(m\max_i t_i n^2)$ $\Pre$ computations. 
\end{theorem}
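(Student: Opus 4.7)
The plan is to chain together the embedding in Proposition \ref{prop_embed1} with the known complexity bound for GR(1) synthesis in order to obtain the stated estimate. First I would invoke Proposition \ref{prop_embed1} to reduce the given MT game $(G,\phi)$ to the GR(1) game $(G,\varphi)$ with $\varphi$ as in \eqref{Eq:Emb1}. Since Proposition \ref{prop_embed1} establishes equivalence, the winning set of player $0$ in the MT game coincides with $\llbracket\varphi\rrbracket_G$, and any symbolic algorithm that computes $\llbracket\varphi\rrbracket_G$ solves the MT game.

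Next I would read off the number of assumptions and guarantees appearing in \eqref{Eq:Emb1}. The conjunction on the left of the implication is indexed by $j\in\{1,\dots,\max_i t_i\}$, so $n_a=\max_i t_i$, while the conjunction on the right is indexed by $i\in\{1,\dots,m\}$, so $n_g=m$. Plugging these into the standard GR(1) synthesis complexity from \cite{GR1}, which states that the fixed-point characterization of $\llbracket\varphi\rrbracket_G$ can be evaluated in $O(n_a n_g n^2)$ symbolic $\Pre$ steps, yields the claimed bound $O(m\max_i t_i\, n^2)$.

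If one prefers to verify the bound directly from the concrete fixed-point expression \eqref{Fixpoint:Emb1}, one can repeat the argument used in the proof of Theorem \ref{timecomplexity2}: the expression has alternation depth three, so by the result of Browne et al. \cite{Browne} each of the inner $\nu$--$\mu$--$\nu$ blocks stabilizes in $O(n^2)$ iterations of $\Pre$; the outer intersection contributes a factor of $m$ (one per guarantee), and the middle union contributes a factor of $\max_i t_i$ (one per assumption), giving the same $O(m\max_i t_i\, n^2)$ total.

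The main obstacle, in my view, is not the counting itself but making sure that the symbolic step count obtained from Browne's alternation-depth bound is correctly distributed across the nested loops in \eqref{Fixpoint:Emb1}; in particular, one must be careful that the intermediate sets $\llbracket M_\ell\wedge\bar T_{\ell,j}\rrbracket$ and $\llbracket\neg M_i\rrbracket$ are treated as precomputed positional sets rather than re-evaluated at every iteration, and that the outer $\nu Z$ iteration reuses (as fresh parameters) the inner fixed-points rather than recomputing them from scratch. Once this bookkeeping is made explicit, the complexity estimate follows exactly as in the GR(1) analysis of \cite{GR1}, and the theorem is established.
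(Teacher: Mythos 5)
Your proposal is correct and follows essentially the same route as the paper: it invokes Proposition \ref{prop_embed1} to reduce the MT game to the GR(1) game with winning condition \eqref{Eq:Emb1}, reads off $n_a=\max_i t_i$ assumptions and $n_g=m$ guarantees, and applies the $O(n_a n_g n^2)$ bound of \cite{GR1}, equivalently arguing from the alternation depth three of \eqref{Fixpoint:Emb1} via \cite{Browne} as in Theorem \ref{timecomplexity2}. Your additional bookkeeping remarks about precomputed positional sets and reuse of intermediate fixed-points are sensible but not needed beyond what the paper already assumes.
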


\begin{proof}Similar to the proof of Theorem \ref{timecomplexity2}, this result follows from the fact that the given fixed-point expression is of alternation depth three. Moreover, in each iteration of the algorithm we sequentially compute $m \max_i t_i$ fixed-point expressions which results in $O(m\max_i t_i n^2)$ $\Pre$ computations in the worst case.
\end{proof}

Comparing the complexities of the \textbf{MT} and the \textbf{GR(1)-Emb} algorithms as given in Theorem \ref{timecomplexity} and Theorem \ref{timecomplexity2}, we get
\begin{equation}\label{Eq:ComparisonMTGR1}O\left(\sum_{i=1}^m t_i\, n^2\right) \leq O\left(m \max_i t_i\, n^2\right).
\end{equation}
Although the \textbf{GR(1)-Emb} and the \textbf{MT} algorithms compute the same winning set, the \textbf{MT} algorithm has better worst case complexity than the \textbf{GR(1)-Emb} algorithm.
Moreover, the equality in \eqref{Eq:ComparisonMTGR1} holds iff
\begin{equation}t_\ell= \max_i t_i\, \qquad \text{for all}\, \ell \in \{1,2, \ldots m\},
\end{equation}
i.e., if the number of targets associated with each mode is equal. In this special case, assuming the number of targets for each mode to be $t$, the fixed-point that needs to be computed for the \textbf{GR(1)-Emb} algorithm is
\begin{align}\label{GR1sametarget}
\nu Z \left(\bigcap\limits_{i=1}^m \mu Y \bigcup\limits_{j=1}^{t}\left(\nu X (\Pre(X) \cap \cup_{\ell=1}^m \llbracket M_\ell \land T_{\ell,j} \rrbracket\right)\right. 
\left.\cup\, \hspace{1mm}\vphantom{\bigcap\limits_{i=1}^m}\hspace{-2mm}\left(\llbracket \neg M_i \rrbracket \cap \Pre(Z)) \cup \Pre(Y)\right)\right),
\end{align}
while for the \textbf{MT} algorithm the fixed-point computation given in \eqref{finalfixed-point} becomes
\begin{align}\label{finalfixed-pointsametarget}
\nu Z \left(\bigcap\limits_{i =1}^m \mu Y \bigcup\limits_{j=1}^{t}\left(\nu X (\Pre(X) \cap \llbracket M_i \land T_{i,j} \rrbracket\right)\right. 
\left.\cup\, \vphantom{\bigcap\limits_{i=1}^m}\hspace{0mm}\left(\llbracket \neg M_i \rrbracket \cap \Pre(Z)) \cup \Pre(Y)\right)\right).
\end{align}

As can be seen from \eqref{GR1sametarget} and \eqref{finalfixed-pointsametarget}, even in this special case where the two different approaches have the same worst-case complexity, the computations performed by \textbf{GR(1)-Emb} and \textbf{MT} differ. While the fixed-point expression \eqref{finalfixed-pointsametarget} has $\cup_{\ell=1}^m \llbracket M_\ell \wedge T_{\ell,j} \rrbracket$ for every mode index $i$, the fixed-point in \eqref{finalfixed-pointsametarget} replaces this set with $\llbracket M_i \wedge T_{i,j} \rrbracket$ for each $i$. Since  $\llbracket M_i \wedge T_i \rrbracket \subseteq \cup_{\ell=1}^m \llbracket M_\ell \wedge T_{\ell,j} \rrbracket$ for all $i$ and $j$, due to the monotonicity of the given fixed-point operator, the \textbf{MT} algorithm performs no worse than the \textbf{GR(1)-Emb} in terms of number of iterations. Moreover, for a given $i$ and $j$, in order to compute the fixed-point in the variable $X$, the algorithm \textbf{MT} only requires the storage of the set $\llbracket M_i \wedge T_{i,j} \rrbracket$ instead of $\cup_{\ell=1}^m \llbracket M_\ell \wedge T_{\ell,j} \rrbracket$. This suggests that the algorithm \textbf{MT} might also have better space complexity. To investigate these differences in practice, we provide in the next section an experimental comparison of two implementations for each of the two algorithms presented in this paper: \textbf{GR(1)-Emb}, and \textbf{MT}.

\section{Experimental Comparison}\label{experiments}
The winning set and a corresponding winning strategy can be computed by iterating the operators on the right hand sides of  \eqref{finalfixed-point} and \eqref{Fixpoint:Emb1} until a fixed-point is reached.  We can improve the time efficiency of a direct implementation of this iteration by using two important ideas from the literature. In \cite{emersonlei}, the authors make the following observation: if one wants to compute the largest (smallest) fixed-point of an operator and one already knows a set that contains (is contained in) this fixed-point, then the largest (smallest) fixed-point computation can be started from this value instead of $V$ ($\varnothing$). By using this idea, the authors showed that the complexity of their computation does not depend on the number of fixed-point operators but rather the number of such fixed-point alternations, i.e., alternation depth. Taking the same idea a step further, in \cite{Browne}, by exploiting monotonicity, the authors point state that one can use the intermediate values of the sets to initialize the fixed-point computations. This method also leads to improved time efficiency, but now with the cost of the requirement to store the value of intermediate sets that are not necessary for the computation of the final fixed-point. However, as mentioned in Section \ref{compWinningSet}, the construction of the winning strategy depends upon these intermediate values. Therefore, in our experiments we use the method described in \cite{Browne}, since the extra memory allocation is partly unavoidable when the desired end product is a winning strategy, and not just the winning set. 

In this section, we discuss the experimental time and memory usage of algorithms \textbf{GR(1)-Emb} and \textbf{MT}. We present three sets of experiments. The first two are designed to compare the performance of the two algorithms in different scenarios, while the last one demonstrates a concrete application of the MT fragment in the design of the ACC example described in Section \ref{ACCexample}.

\begin{subsection}{Random Linear Time-Invariant Systems with Multiple Targets}\label{exampleLTI}
We start with the simplest class of dynamical systems: linear time-invariant systems. We demonstrate how the performance of the two algorithms differs as the theoretical worst-case gap between the \textbf{GR(1)-Emb} algorithm and \textbf{MT} algorithm deepens. To this end, we consider a scenario where all modes but one have a single associated target. For this remaining mode, starting from a single target we gradually increase the number of associated targets in order to accentuate the difference between the two sides of the inequality in \eqref{Eq:ComparisonMTGR1}. We provide the descriptions of all mode and target sets
in Appendix \ref{exampleData}. In Fig. \ref{Fig:multipleTargets}, we summarize our findings for the case when we have three, six and nine modes. We plot in Fig. \ref{fig:mulitpleTargets1} the ratio between the number of iterations it takes for the \textbf{GR(1)-Emb} algorithm versus the \textbf{MT} algorithm to compute the winning set. In Fig. \ref{fig:multipleTargets2} we compare the two algorithms in the same fashion, but now in terms of the elapsed time. Each data point represents the average value we obtained after computing the winning set on 20 random linear time-invariant systems. All systems have the form $\dot{x}=Ax+Bu$, where the entries of the matrices $A$ and $B$ are randomly chosen from the set $[-1,1]$. The state space and the input space are the sets $[-6,6] \times [-6, 6]$, and $[-4, 4]$, respectively. As can be seen from both figures,
 \textbf{MT} outperforms \textbf{GR(1)-Emb}, and the performance difference becomes progressively more prominent as the number of extra targets and modes increase. 

\begin{figure}[h]
\centering
\begin{subfigure}[h]{0.485\textwidth}
\centering
   \includegraphics[scale=0.365,clip=true,trim=66 190 55 195]{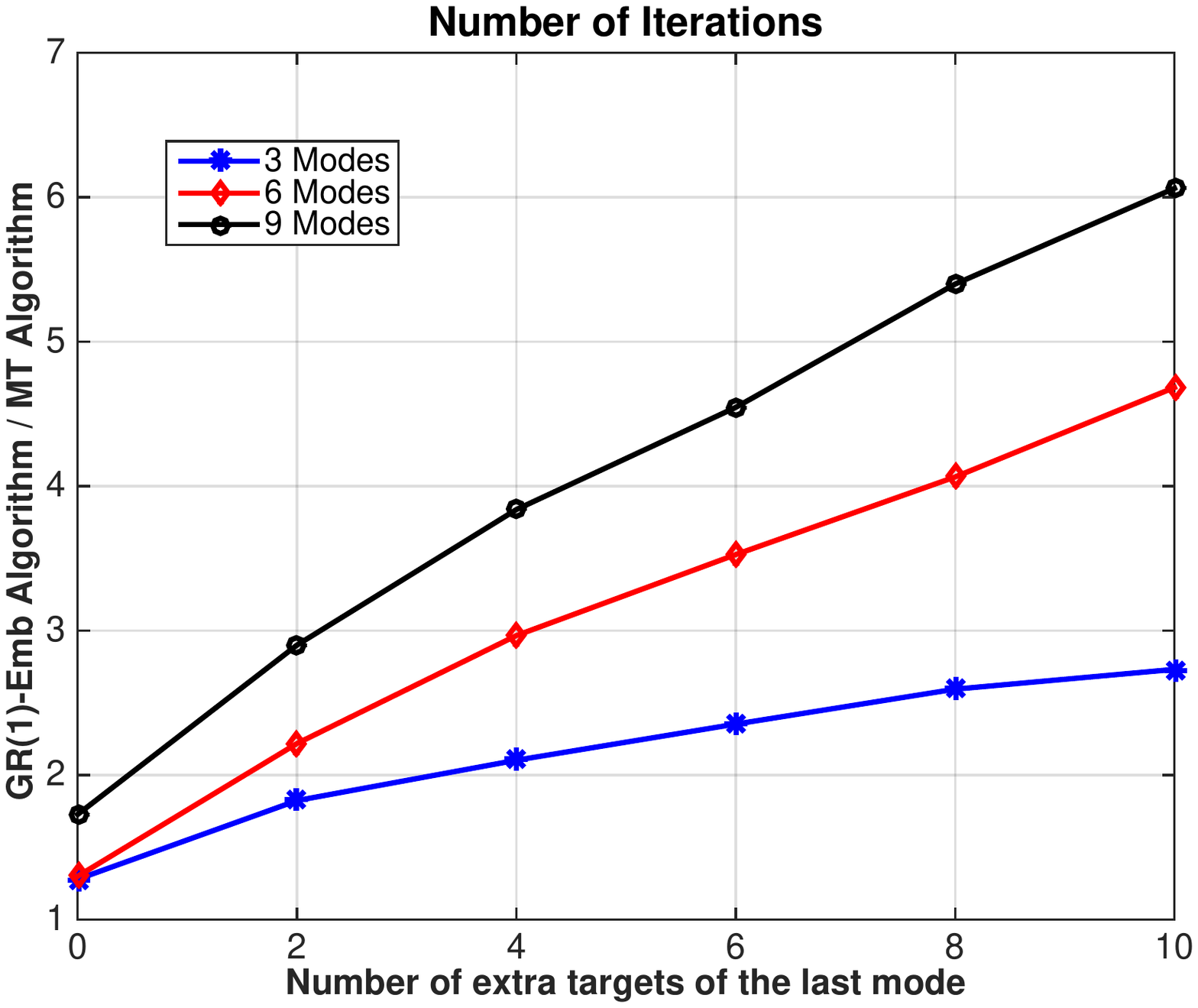}
   \caption{The ratio of number of iterations of \textbf{GR(1)-Emb} to \textbf{MT}.}
   \label{fig:mulitpleTargets1}
\,
 \end{subfigure}
 \hspace{1.4mm}
\begin{subfigure}[h]{0.485\textwidth}
\centering
   \includegraphics[scale=0.365,clip=true,trim=67 190 55 195]{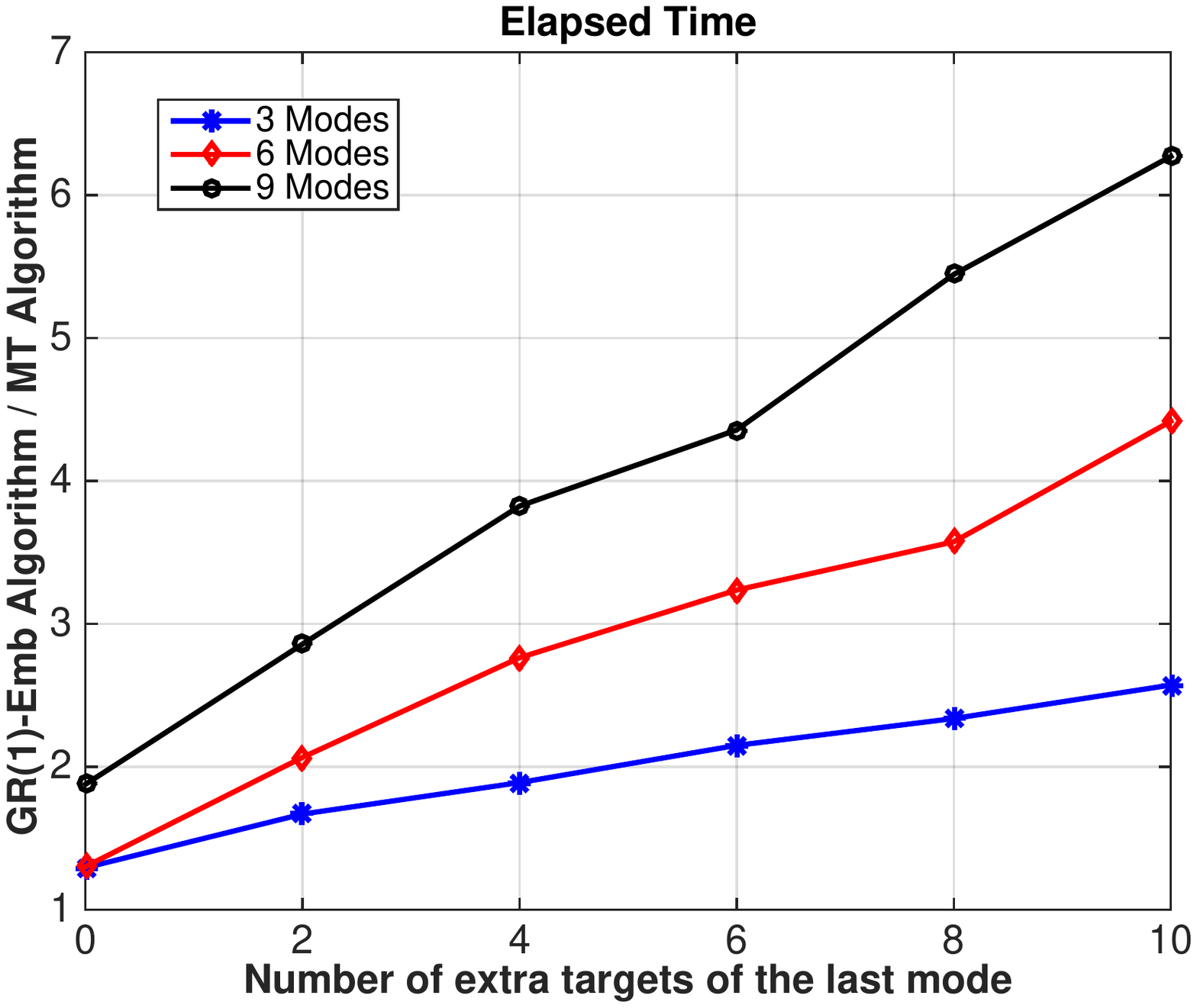}
   \caption{The ratio of elapsed time until convergence of \textbf{GR(1)-Emb} to \textbf{MT}.}
   \label{fig:multipleTargets2}
 \end{subfigure}\\
 \caption{Comparison of the algorithms \textbf{GR(1)-Emb} and \textbf{MT} when there are multiple targets corresponding to one of the modes.}
 \label{Fig:multipleTargets}
\end{figure}
\end{subsection}

\begin{subsection}{Unicycle Cleaning Robot}
We consider a scenario where a unicycle robot cleans the rooms on a hotel floor. The robot has to reach one of the rooms that is not clean and stay there, until an external signal indicates that the current room has been cleaned. We now explain how we model this scenario as an MT game. Assume that there are two rooms, defined by the atomic propositions $T_1$ and $T_2$. Each mode-target pair corresponds to a different subset of rooms  that need to be cleaned. Specifically, $M_1$, $M_2$, and $M_3$ indicate that only the first room, only the second room, and both of the rooms need to be cleaned, respectively. Accordingly, the MT formula corresponding to this scenario is:
\begin{equation*}(\Diamond \square M_1 \implies \Diamond \square T_1) \wedge (\Diamond \square M_2 \implies \Diamond \square T_2) \wedge (\Diamond \square M_3 \implies (\Diamond \square T_1 \vee  \Diamond \square T_2)).
\end{equation*}
Note that, if there are $k$ rooms, the number of modes is $2^k-1$.

We first construct the game graph corresponding to the dynamics of the cleaning robot. The differential equations:
\begin{equation*}
\dot{x}=v \cos(\theta),\,\dot{y}=v \sin(\theta),\,\dot{\theta}=\omega,
\end{equation*}
offer a simplified model for a 3-wheel robot equipped with differential drive. The pair $(x,y) \in \R^2$ denotes the position of the robot, $\theta \in [-\pi,\pi[$ denotes its orientation, and $(v,\omega) \in \R^2$ are the control inputs, linear velocity $v$ and angular velocity $\omega$. For this example we restrict the position (the location of the rooms) to the set $[1,7.5]\times[1,7.5]$, input to the set $[0, 0.5] \times [-0.5, 0.5]$ and create an abstraction\footnote{The parameters used for the abstraction were $\eta=0.25$, $\mu=0.5$, and $\tau=0.5$. An explanation of the meaning of these parameters is given in~\cite{Pessoa}.} using the PESSOA \cite{Pessoa} tool. This abstraction is stored as an Ordered Binary Decision Diagram \cite{BDDbook} (OBDD) and constitutes the game graph describing the dynamics of the cleaning robot. It has 21141 vertices or states and 6 inputs that are available at each state.

We now describe the dynamics of the modes. When the robot is in room $i$ that has not yet been cleaned, the mode can change to the mode where the room $i$ does not need to be cleaned anymore. The nondeterminism in this change models an external signal indicating whether the cleaning in the current room has been completed or not. When all the rooms are cleaned, a nondeterministic mode transition can occur to any other mode to restart the process. In Fig. \ref{modeDynamics}, we illustrate the dynamics of the modes when there are two rooms. As can be seen, there is a nondeterministic transition from $M_3$ to $M_2$ as the robot enters the room 1 ($T_1$). Similarly, if the system is in $M_1$ (only room 1 is not clean), when the robot reaches room 1, the system can take a nondeterministic transition to any of the other modes, i.e., we restart the cleaning process once all the rooms are cleaned. 

\begin{figure}
\begin{center}
\begin{tikzpicture}[shorten >=1pt,node distance=3cm,on grid,auto] 
   \node[state] (M_1)   {$M_1$}; 
   \node[state] (M_2) [right=of M_1] {$M_2$}; 
   \node[state] (M_3) [right=of M_2] {$M_3$}; 
    \path[->] 
    (M_1) edge  [loop above]  node {$\mathtt{true}$} (M_1)
          edge  node [swap] {$T_1$} (M_2)
          edge [bend right] node [swap] {$T_1$} (M_3)
    (M_2) edge  node [swap]  {$T_2$} (M_3)
          edge [loop above] node {$\mathtt{true}$} (M_2)
           edge [bend right]  node [swap] {$T_2$} (M_1)
    (M_3) edge [loop above] node{$\mathtt{true}$} (M_3)
          edge  [bend right] node [swap] {$T_1$} (M_2)
           edge  [bend left=60] node {$T_2$} (M_1);
\end{tikzpicture}
\end{center}
\caption{Mode dynamics for the cleaning robot, when there are two rooms ($M_1$: only room 1 is not clean, $M_2$: only room 2 is not clean, $M_3$ both rooms are not clean).}
\label{modeDynamics}\end{figure}
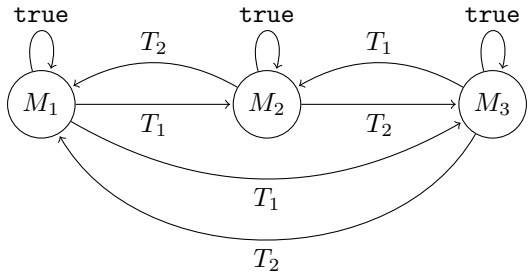

To obtain the final game graph describing the dynamics of both the modes and the cleaning robot, we compose the game graph describing the modes and the game graph describing the dynamics of the robot. Note that, the second player in this game arises due to the conservative nature of the abstraction, as explained in~\cite{Paulobook}, and the nondeterminism in the mode changes, both of which can be modeled as an adversarial disturbance.
\end{subsection}

We compare the performance of the \textbf{GR(1)-Emb}, and the \textbf{MT} algorithms as we increase the number of rooms from 2 to 5. The rooms are boxes of various dimensions defined as:
\begin{eqnarray*}
T_1 &=& \begin{bmatrix}1 &3\end{bmatrix} \times \begin{bmatrix}1 &2.5\end{bmatrix} \\
T_2 &=& \begin{bmatrix}1 &3\end{bmatrix} \times \begin{bmatrix}3 &5\end{bmatrix} \\
T_3 &=& \begin{bmatrix}3.5 &5.5\end{bmatrix} \times \begin{bmatrix}3 &5.5\end{bmatrix} \\
T_4 &=& \begin{bmatrix}3.5 &5.5\end{bmatrix} \times \begin{bmatrix}1 &2.5\end{bmatrix}\\
T_5 &=& \begin{bmatrix}6 &7.5\end{bmatrix} \times \begin{bmatrix}2 &5\end{bmatrix}.
\end{eqnarray*}

Fig. \ref{Fig:cleaningRobot} summarizes our findings. Fig. \ref{fig:subfig2} and Fig. \ref{fig:subfig3} illustrate that, as the number of rooms increases, the gap between the performance of the algorithm \textbf{MT} and the algorithm \textbf{GR(1)-Emb} increases significantly both in terms of number of iterations of the fixed-point algorithms as well as the computation time. Note that, when there are $k$ rooms we have, $m \max_i t_i = (2^k-1)k$, and $\sum_i t_i = \sum\limits_{j=1}^k {k \choose j} k$. Therefore, the widening of the performance gap is expected, since as the number of rooms increases, so does the difference between the worst case time complexities of  \textbf{GR(1)-Emb}  and \textbf{MT}. In terms of memory usage, \textbf{GR(1)-Emb} does slightly worse than \textbf{MT} as expected, but the performance difference is not significant.

\begin{figure}[h]
\centering
 \begin{subfigure}[t]{0.49\textwidth}
\centering
   \includegraphics[scale=0.338,clip=true,trim=35 165 75 210]{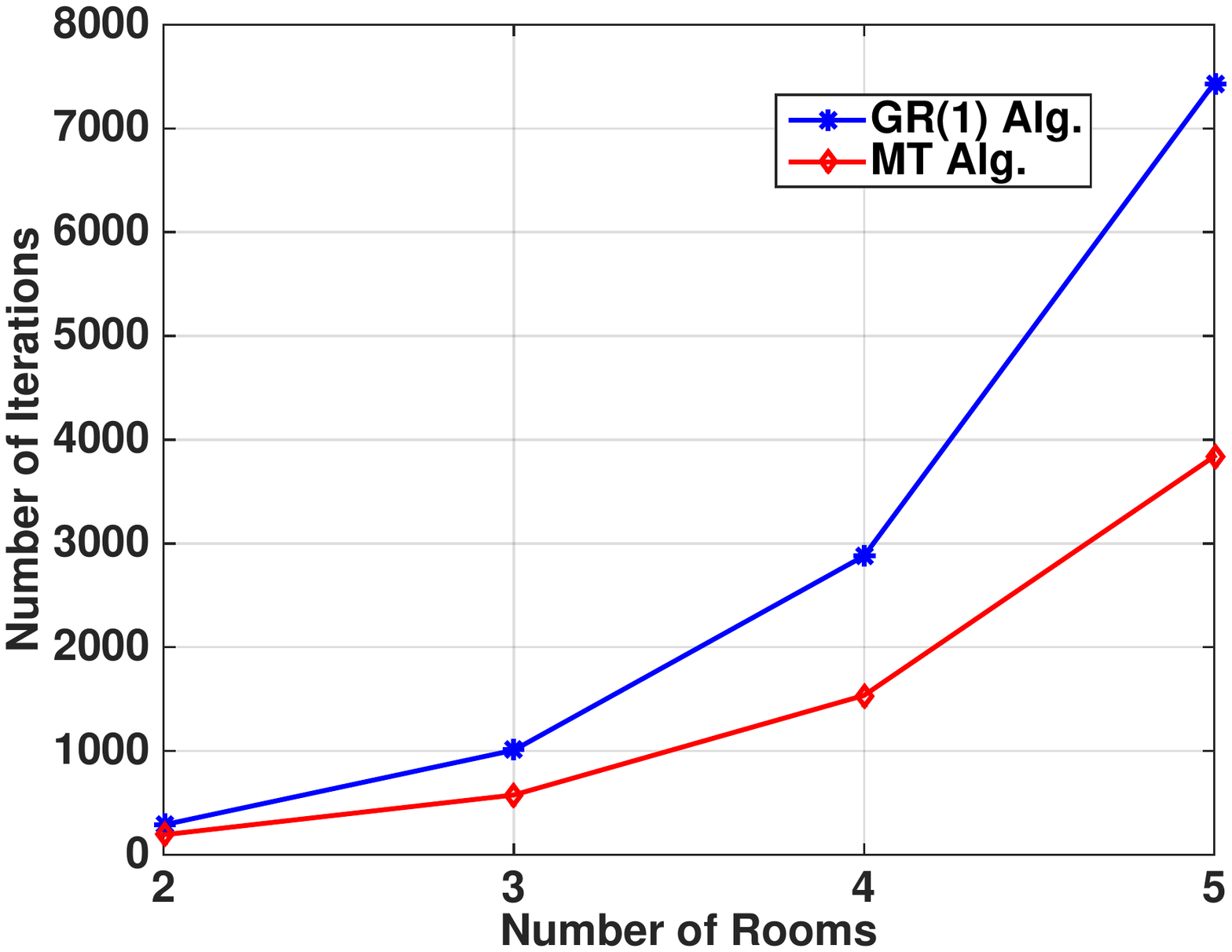}
   \caption{The number of iterations until convergence for the algorithms \textbf{GR(1)-Emb} and \textbf{MT}.}
      \label{fig:subfig2}
 \end{subfigure}
 \hspace{0.25mm}
\begin{subfigure}[t]{0.49\textwidth}
\centering
   \includegraphics[scale=0.338,clip=true,trim=42 165 75 210]{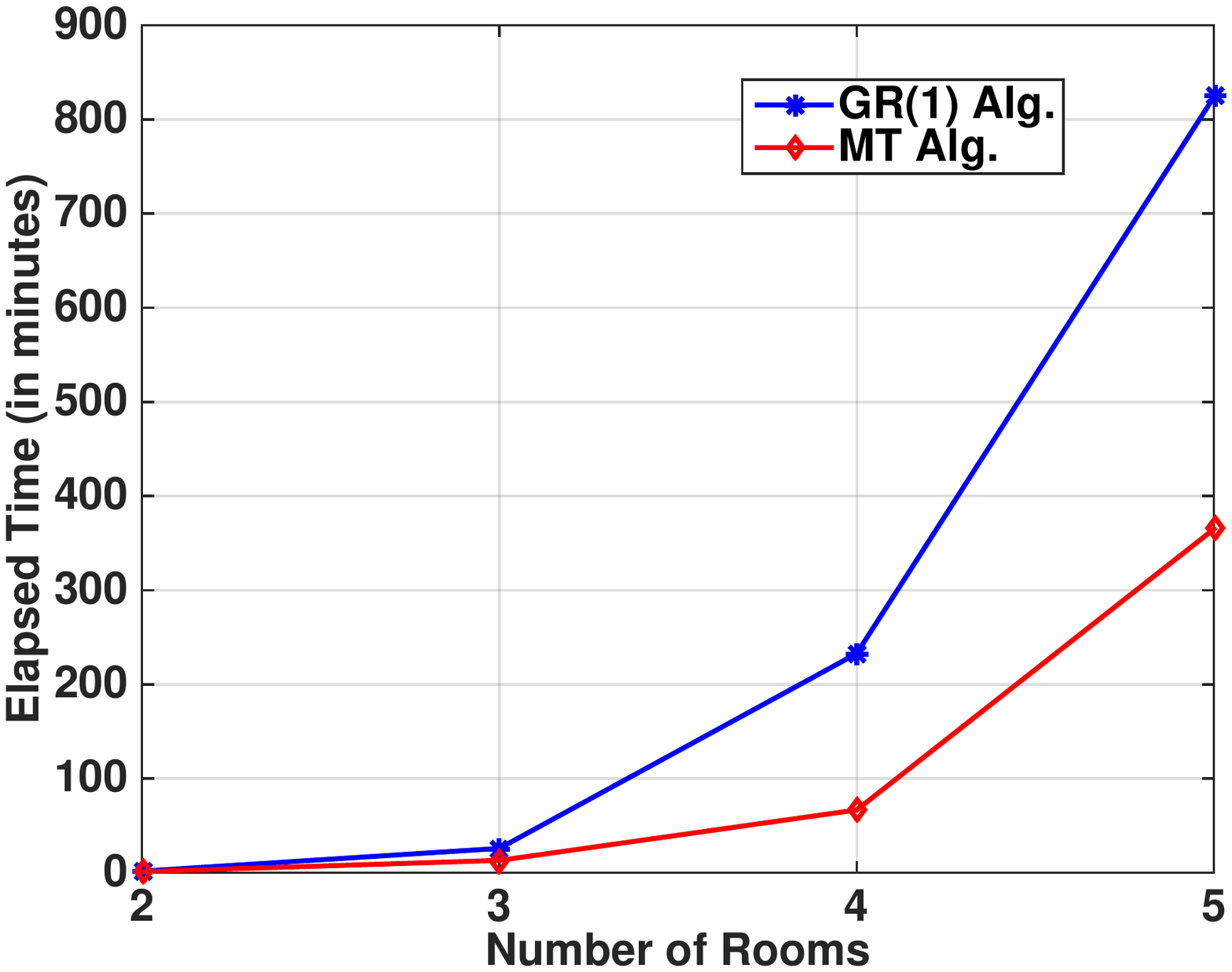}
   \caption{Elapsed time until convergence for the algorithms \textbf{GR(1)-Emb} and \textbf{MT}.}
   \label{fig:subfig3}
 \end{subfigure}
 \hspace{0.2mm}
\caption{Comparison of the algorithms \textbf{GR(1)-Emb} and \textbf{MT} on the cleaning robot case study for varying number of rooms.}
\label{Fig:cleaningRobot}
\end{figure}

\begin{subsection}{Adaptive Cruise Control (ACC)}\label{ACCexample}
The last example demonstrates the usefulness of the MT fragment by applying it on the ACC design problem that we detailed in Section \ref{problemformulation}. We model the dynamics of the ACC equipped vehicle by a hybrid system with two discrete states which specify whether there is a lead car or not. The continuous states describe the evolution of the velocity of the ACC equipped vehicle $(v)$ as well as the velocity of the lead car ($v_L$), and the distance to the lead car ($h$) whenever there is one. The net action of braking and engine torque applied to the wheels ($F_w$) is viewed as the control input and is assumed to satisfy the bound $-0.3mg \leq F_w \leq 0.2mg$, where $m$ is the mass of the ACC equipped vehicle and $g$ is the gravitational constant. Via PESSOA, we constructed a discrete abstraction of this hybrid system, which together with the dynamics of the modes constitutes the game graph of the MT game. The abstraction contains over 1.5 million states. We refer the reader to \cite{ACCpaper2} for the details of the construction of this abstraction and a complete description of the corresponding hybrid model. The winning condition of the game is the conjunction of the safety specification $\varphi_{\text{safety}}$ with the MT formula $\varphi_{\text{speed}} \wedge \varphi_{\text{timegap}}$, where
\begin{equation}
\label{specs_ACC}
\begin{split}
\varphi_{\text{safety}} & \equiv \square [\tau \geq \tau_{\text{safe}}], \\
\varphi_{\text{speed}} &\equiv \left(\Diamond \square M_{\text{speed}} \implies \Diamond \square [v_{\text{des}}-\epsilon_v, v_{\text{des}}+\epsilon_v]\right),\\
\varphi_{\text{timegap}}& \equiv \left(\Diamond \square M_{\text{timegap}} \implies \Diamond \square [\tau_{\text{des}}- \epsilon_\tau, \tau_{\text{des}}+\epsilon_\tau]\right).
\end{split}
\end{equation}
The values of the parameters appearing in \eqref{specs_ACC} are $\tau_{\text{safe}} = 1$ s, $v_{\text{des}} = 25$ m/s, $\epsilon_v$ = 1,  $\tau_{\text{des}}= 1.6$, and $\epsilon_\tau =1$. Note that the additional safety formula, $\varphi_{safety}$, can be handled separately by first synthesizing a safety controller and then composing this controller with a controller synthesized solely for the MT formula, $\varphi_{\text{speed}} \wedge \varphi_{\text{timegap}}$.

In Figure \ref{ACCresults}, we present the winning set computed via the \textbf{MT} Algorithm. As can be seen, the domain does not contain the points where $h$, the headway, is small and $v$, the velocity of the ACC vehicle, is high, since there is no sequence of control inputs to maintain a safe headway starting from these states. We simulated the MT controller on CARSIM, an industry standard car dynamics simulation package, for the following scenario: at time $t = 0$ s, a lead car is present driving below the desired speed $v_{\text{des}} = 25$ m/s of the ACC car, then leaves the lane at $t = 3$ s, allowing the ACC car to reach and attain its desired speed. At $t = 13$ s, a new lead car cuts in $30$ m in front of the ACC car and starts decelerating. This means that the ACC car should slow down in order to increase the headway. Fig. \ref{ACCresults2} presents the behavior of the MT controller. Notably, all constraints, which are indicated by green lines, are satisfied throughout the simulations. For a detailed discussion on hardware implementation of the MT controller and further experimental results, we refer the reader to \cite{ACCpaper2}.
\begin{figure}
\centering
   \includegraphics[scale=0.9,clip=true,trim=0 1 0 1]{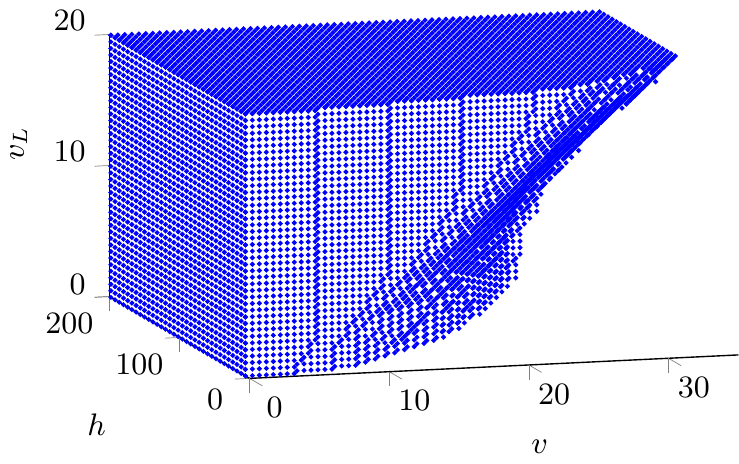}
      \caption{The winning set computed by the \textbf{MT} Algorithm.}
   \label{ACCresults}
\end{figure}

\begin{figure}[h]
\centering
 \includegraphics[scale=0.7,clip=true,trim=0 1 0 1]{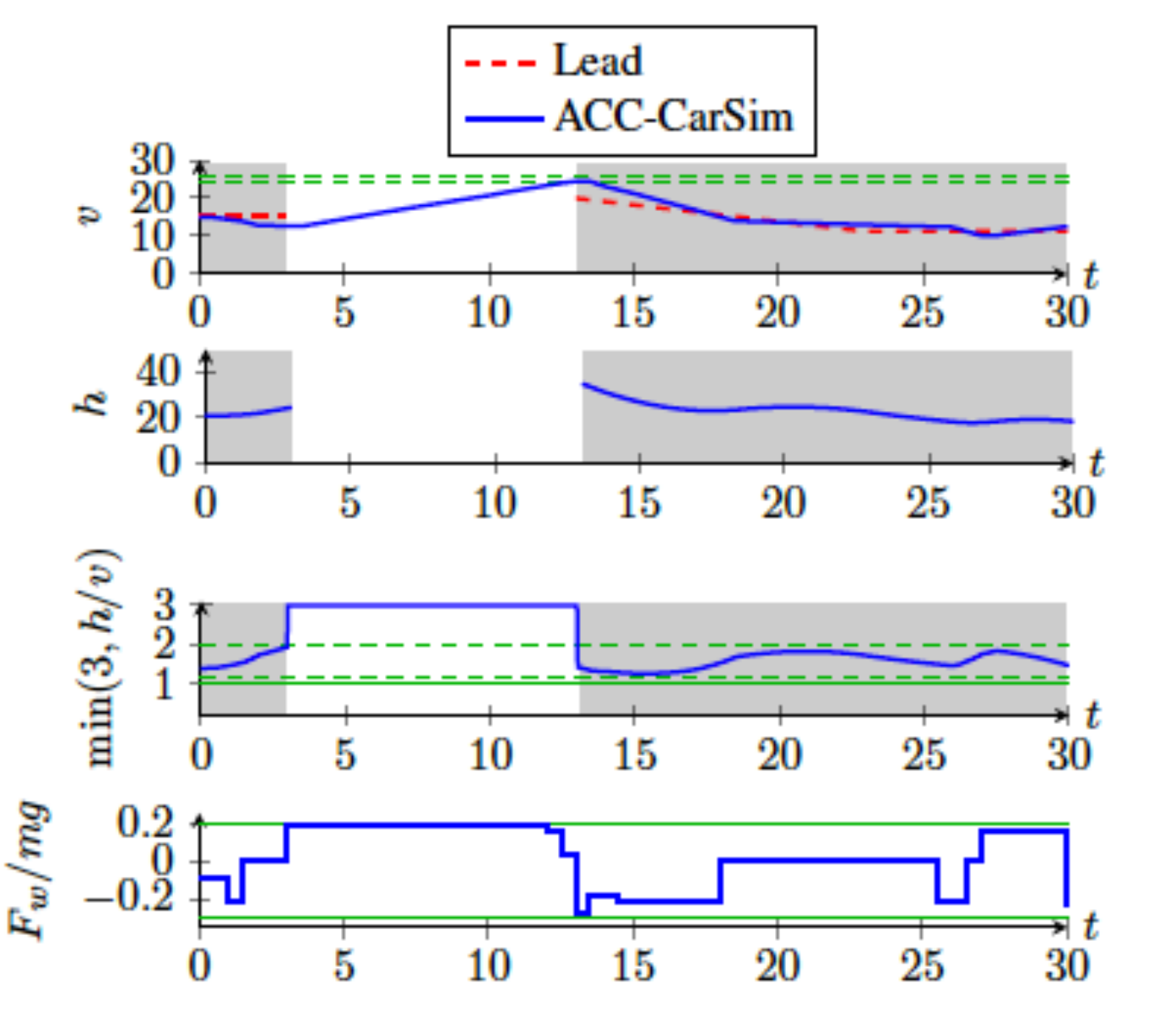}
 	\caption{Simulation results in CarSim of the PESSOA controllers. The plots show, from top to bottom, velocities, headway, time headway, and applied control input. Grayed areas indicate that the system is in specification mode $M_{\text{timegap}}$. Dashed green lines indicate target sets, solid green indicate safety sets.}
	\label{ACCresults2}
\end{figure}

\end{subsection}
The different experimental results suggests the following: \textbf{(1)} \textbf{MT} is consistently better than \textbf{GR(1)-Emb}. Even for the case when the theoretical worst case complexities of both algorithms are the same, \textbf{MT} outperforms \textbf{GR(1)-Emb}. However, the performance increase is not always considerable in this case; \textbf{(2)} there is no significant difference in the memory usage between \textbf{MT} and \textbf{GR(1)-Emb} algorithms; \textbf{(3)} as the gap between $\max_i t_i$ and $\sum_i t_i$ widens, so does the performance difference between \textbf{GR(1)-Emb} and \textbf{MT}, which is in accordance with the results in Section \ref{compWinningSet}.

%
%
%
%
%
%
%
%


\section{Conclusions}\label{conclusion}
We introduced a new class of LTL games called mode-target games and argued that these games can be used to model a variety of control design problems encountered in practice. We provided two algorithms to solve MT games. The first algorithm is based on transforming MT games to simple games, a class of LTL games for which we provide a synthesis algorithm. This leads to an algorithm that solves MT games in a number of steps polynomial in the size of the game graph. We next provided a different algorithm, that relies on the fact that every MT game can be embedded into a GR(1) game. We also showed that the direct algorithm has better worst case complexity than the algorithm obtained via the GR(1) embedding. These observations were validated through multiple simulations. As future work, we plan on investigating whether additional structure arising in control problems can lead to further simplifications both in MT games as well as other LTL games.

\section{Acknowledgements}
\label{Acknowledgements}
The work is supported by the NSF Contract \#CNS-1239037 and the project ExCAPE: Expeditions in Computer Augmented Program Engineering. The authors would like to thank Omar Hussien for his help with the CarSim simulations.

\appendix
\section{Preliminary Lemmas}\label{pre_lemmas}
A property $\Phi$ is a \emph{stable property} iff $\Post(\Phi) \subseteq \Phi$,
i.e., if $\Phi$ is closed under suffixes. We call $\varphi$ a \emph{stable}
formula if $W(\varphi)$ is a stable property.
It is proved in \cite{Sistla} that a formula $\varphi$ is a stable formula
iff $\square \varphi \equiv \varphi$. Then it
follows that any formula of the form $\square \phi$, for some $\phi$ is a stable formula.
Moreover, the conjunction of stable formulas is also a stable formula.
Take two stable formulas $\varphi_1$ and $\varphi_2$; then $\varphi_1 \wedge \varphi_2 \equiv \square \varphi_1 \wedge \square \varphi_2 \equiv \square (\varphi_1 \wedge \varphi_2)$, which is a stable formula.
Also recall that a property $\Phi$ is an \emph{absolute liveness property} iff
$\Sigma^*\Phi \subseteq \Phi$. We call $\varphi$ an \emph{absolute liveness formula}
if $W(\varphi)$ is an absolute liveness property.

%
%
%
\begin{lemma}\label{implicationPre}Given the formulae $\varphi_1$ and $\varphi_2$, if we have $W_G( \varphi_1 \wedge \varphi_2) = \emptyset$, then the following holds:
\begin{equation*}W_G(\neg \varphi_1 \vee \varphi_2) = W_G(\neg \varphi_1).
\end{equation*}
\end{lemma}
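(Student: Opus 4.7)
The plan is to prove the stated equality by double inclusion, after unfolding the definition $W_G(\varphi) = W(\varphi) \cap L(\Omega(G))$.

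First, I would establish the easy inclusion $W_G(\neg \varphi_1) \subseteq W_G(\neg \varphi_1 \vee \varphi_2)$. This is purely propositional: any $w \in L(\Omega(G))$ with $w \models \neg \varphi_1$ trivially satisfies the weaker disjunction $\neg \varphi_1 \vee \varphi_2$, so it lies in $W(\neg \varphi_1 \vee \varphi_2) \cap L(\Omega(G))$. No use of the hypothesis is needed here.

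For the reverse inclusion $W_G(\neg \varphi_1 \vee \varphi_2) \subseteq W_G(\neg \varphi_1)$, I would argue by contradiction. Suppose $w \in W_G(\neg \varphi_1 \vee \varphi_2)$ but $w \notin W_G(\neg \varphi_1)$. Since $w \in L(\Omega(G))$, the failure $w \notin W_G(\neg \varphi_1)$ forces $w \not\models \neg \varphi_1$, i.e., $w \models \varphi_1$. Combined with $w \models \neg \varphi_1 \vee \varphi_2$, this yields $w \models \varphi_2$, and hence $w \models \varphi_1 \wedge \varphi_2$. Together with $w \in L(\Omega(G))$, this places $w$ in $W_G(\varphi_1 \wedge \varphi_2)$, contradicting the standing hypothesis that this set is empty.

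There is no real obstacle here: the lemma is essentially a sharpening of the tautology $(\neg \varphi_1 \vee \varphi_2) \equiv \neg \varphi_1$ under the side condition $\varphi_1 \wedge \varphi_2 \equiv \mathtt{false}$, specialized to words realizable in $G$. The only subtlety is remembering to carry the witness $w \in L(\Omega(G))$ through both directions, since $W_G$ is defined relative to plays in $G$ rather than all of $\Sigma^\omega$; the equality need not hold for the bare $W(\cdot)$ operator without this restriction.
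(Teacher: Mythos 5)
Your proof is correct and amounts to the same argument as the paper's: the paper splits $\neg\varphi_1\vee\varphi_2$ into the three disjoint cases $(\neg\varphi_1\wedge\varphi_2)\vee(\neg\varphi_1\wedge\neg\varphi_2)\vee(\varphi_1\wedge\varphi_2)$ and discards the last using the hypothesis, which is exactly your contradiction step phrased at the level of sets rather than individual words. Your added remark about carrying the witness through $L(\Omega(G))$ is a fair point of care, but there is no substantive difference in approach.
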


\begin{proof}
\begin{eqnarray*}W_G(\neg \varphi_1 \vee \varphi_2)
&=& W_G((\neg \varphi_1 \wedge \varphi_2) \vee (\neg \varphi_1 \wedge \neg \varphi_2) \vee (\varphi_1 \wedge \varphi_2)) \\
&=& W_G((\neg \varphi_1 \wedge \varphi_2) \vee (\neg \varphi_1 \wedge \neg \varphi_2)) \\
&=& W_G(\neg \varphi_1).
\end{eqnarray*}
\end{proof}

\begin{lemma}\label{implicationProof}Given the sets of LTL formulae $\cup_{i \in I} \{ \varphi_i \}$, $\cup_{i \in I} \{ \psi_i \}$, and a game graph $G$, if for all $i \in I$ we have $W_G\left(\varphi_i \wedge \bigvee_{j \in I \setminus \{i\}}\psi_j\right) = \emptyset$, then the following holds:
\begin{equation*}W_G\left(\bigvee\limits_{i \in I}\varphi_i \implies \bigvee\limits_{i \in I} \psi_i \right) = W_G\left(\bigwedge\limits_{i \in I} \left(\varphi_i \implies \psi_i \right)\right)
%
\end{equation*}
\end{lemma}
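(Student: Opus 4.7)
My plan is to prove the two set-inclusions separately. The direction
$W_G\bigl(\bigwedge_{i\in I}(\varphi_i \implies \psi_i)\bigr) \subseteq W_G\bigl(\bigvee_{i\in I}\varphi_i \implies \bigvee_{i\in I}\psi_i\bigr)$
is a propositional tautology and does not need the hypothesis: if every conjunct $\varphi_i \implies \psi_i$ holds on a word $w$, then as soon as some $\varphi_i$ is satisfied by $w$, so is $\psi_i$, and hence $\bigvee_j \psi_j$. I would dispose of this direction in a single line.

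For the reverse inclusion, I would fix $w \in W_G\bigl(\bigvee_i \varphi_i \implies \bigvee_i \psi_i\bigr)$, so in particular $w \in L(\Omega(G))$, and aim to show $w \models \varphi_i \implies \psi_i$ for each fixed $i \in I$. If $w \not\models \varphi_i$ the implication is trivially satisfied. Otherwise $w \models \bigvee_j \varphi_j$, and the assumption on $w$ gives $w \models \psi_{j^*}$ for some $j^* \in I$. The decisive step is to invoke the hypothesis $W_G\bigl(\varphi_i \wedge \bigvee_{j \neq i}\psi_j\bigr) = \emptyset$: since $w \in L(\Omega(G))$ and $w \models \varphi_i$, this forces $w \not\models \psi_j$ for every $j \neq i$, so $j^* = i$ and $w \models \psi_i$.

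A slightly more mechanical alternative is to derive the statement from Lemma \ref{implicationPre} applied index-by-index. Using De Morgan and distribution, rewrite
\begin{equation*}
\bigvee_i \varphi_i \implies \bigvee_i \psi_i \ \equiv\ \bigwedge_i\bigl(\neg \varphi_i \vee \textstyle\bigvee_j \psi_j\bigr), \qquad \bigwedge_i (\varphi_i \implies \psi_i) \equiv \bigwedge_i (\neg\varphi_i \vee \psi_i),
\end{equation*}
so that it suffices to show $W_G(\neg \varphi_i \vee \bigvee_j \psi_j) = W_G(\neg \varphi_i \vee \psi_i)$ for each $i$. Splitting $\neg \varphi_i \vee \bigvee_j \psi_j$ as $(\neg\varphi_i \vee \psi_i) \vee \bigvee_{j \neq i}\psi_j$ and applying Lemma \ref{implicationPre} with $\varphi_1 := \varphi_i \wedge \neg\psi_i$ and $\varphi_2 := \bigvee_{j \neq i}\psi_j$ yields this equality once we verify that $W_G(\varphi_1 \wedge \varphi_2) = \emptyset$, which is immediate from the stated hypothesis since $\varphi_1 \wedge \varphi_2$ implies $\varphi_i \wedge \bigvee_{j \neq i}\psi_j$.

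The only real obstacle here is bookkeeping: one must keep in mind that $W_G$ intersects with $L(\Omega(G))$, so the hypothesis is really a statement about words realized by plays in $G$, and the case analysis above must be carried out strictly within that set. Once this is internalized, either strategy reduces to a routine propositional case split.
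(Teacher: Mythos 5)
Your proposal is correct, and both routes you sketch go through. Your ``mechanical'' alternative is essentially the paper's own proof repackaged: the paper likewise eliminates the cross terms $\bigvee_{j\in I\setminus\{i\}}\psi_j$ by combining the hypothesis with Lemma~\ref{implicationPre} and then absorbs $W_G\bigl(\bigwedge_{i}\neg\varphi_i\bigr)$ into $W_G\bigl(\bigwedge_{i}(\neg\varphi_i\vee\psi_i)\bigr)$; the only difference is that you first distribute $\bigwedge_i\neg\varphi_i$ over $\bigvee_j\psi_j$ and invoke Lemma~\ref{implicationPre} once per index $i$, whereas the paper performs a single global split of $\bigl(\bigwedge_i\neg\varphi_i\bigr)\vee\bigl(\bigvee_j\psi_j\bigr)$ into a ``diagonal'' disjunct $\bigwedge_i(\neg\varphi_i\vee\psi_i)$ and an ``off-diagonal'' one. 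Your primary route is genuinely more elementary: it fixes a word $w\in L(\Omega(G))$ and does a pointwise case analysis, using the hypothesis only to force $j^*=i$. That buys two things. First, it makes completely explicit where the restriction to $L(\Omega(G))$ enters (the hypothesis is vacuous off that set), which you correctly flag as the only delicate point. Second, it sidesteps the one step in the paper's chain that is not a propositional equivalence --- the rewriting of $\bigl(\bigwedge_i\neg\varphi_i\bigr)\vee\bigl(\bigvee_j\psi_j\bigr)$ as a disjunction of two conjunctions holds only as a $W_G$-equality under the hypothesis, a subtlety the paper leaves implicit but your word-level argument never has to confront. The one-line disposal of the easy inclusion is also fine, since that direction needs no hypothesis.
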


\begin{proof}The following holds:
\begin{eqnarray*}W_G\left(\bigvee\limits_{i \in I}\varphi_i \implies \bigvee\limits_{i \in I} \psi_i \right) &=& W_G\left(\left(\bigwedge\limits_{i \in I}\neg \varphi_i\right) \vee \left(\bigvee\limits_{j \in I} \psi_j \right)\right) \\
&=& W_G\left(\left(\bigwedge\limits_{i \in I}\neg \varphi_i \vee \psi_i \right) \vee \left(\bigwedge\limits_{i \in I} \neg \varphi_i \vee \bigvee\limits_{j \in I \setminus\{i\}} \psi_j \right)\right) \\
&=&  W_G\left(\bigwedge_{i \in I} (\neg\varphi_i \vee \psi_i)\right) \cup W_G\left(\bigwedge\limits_{i \in I} \left(\neg \varphi_i \vee \bigvee\limits_{j \in I \setminus\{i\}} \psi_j \right)\right) \\
& \eqbir & W_G\left(\bigwedge_{i \in I} (\neg\varphi_i \vee \psi_i)\right) \cup W_G\left(\bigwedge\limits_{i \in I} \neg \varphi_ i \right) \\
& \eqiki & W_G\left(\bigwedge_{i \in I} (\neg\varphi_i \vee \psi_i)\right) = W_G\left(\bigwedge\limits_{i \in I} \left(\varphi_i \implies \psi_i \right)\right) 
\end{eqnarray*}
where $\eqbir$ follows from the fact that $\forall i \in I, W_G\left(\varphi_i \wedge \bigvee_{j \in I \setminus \{i\}}\psi_j\right) = \emptyset$, and Lemma~\ref{implicationPre}, while $\eqiki$ follows from the inclusion $W_G\left(\bigwedge_{i \in I} \neg \varphi_i\right) \subseteq W_G\left(\bigwedge_{i \in I} (\neg\varphi_i \vee \psi_i)\right).$
%
%
%
%

\end{proof}

\begin{lemma}\label{always2} Given a stable formula $\varphi$, and a winning strategy $f$ for player 0 in $(G,\varphi)$, we have $\llbracket \varphi \rrbracket = V^*$, where $V^* := \cup_{v \in \llbracket \varphi \rrbracket} \cup_{r \in \Omega_{f,v}(G)}\cup_{i \in \N}r_i$, i.e., the set of all states visited under the strategy $f$.
\end{lemma}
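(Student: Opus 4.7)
The plan is to prove the two set inclusions separately. The containment $\llbracket \varphi \rrbracket \subseteq V^*$ is immediate from the definition: for any $v \in \llbracket \varphi \rrbracket$ we pick any play $r \in \Omega_{f,v}(G)$ (which exists because the transition relation is total), and observe that $r_0 = v$ places $v$ in the union defining $V^*$.

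The interesting direction is $V^* \subseteq \llbracket \varphi \rrbracket$. Given $u \in V^*$, by definition there exist $v \in \llbracket \varphi \rrbracket$, a play $r \in \Omega_{f,v}(G)$, and $i \in \N$ with $r_i = u$. My plan is to exhibit a winning strategy from $u$ by ``shifting'' $f$: define a strategy $f_u$ from $u$ by prepending the history prefix $r_0 r_1 \ldots r_{i-1}$, that is
\begin{equation*}
f_u(s, w) := f(r_0 r_1 \ldots r_{i-1} s,\, w).
\end{equation*}
The goal is then to check that every $r' \in \Omega_{f_u, u}(G)$ is winning for player 0 in $(G, \varphi)$.

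The key step is to concatenate: form $r'' := r_0 r_1 \ldots r_{i-1} r'$ and verify, case by case on whether the current index is strictly below $i$, equal to $i-1$, or at least $i$, that $r''$ is a valid play in $\Omega_{f,v}(G)$. Since $f$ is a winning strategy from $v$, we have $L(r''), 0 \models \varphi$. The stability hypothesis, $\square\varphi \equiv \varphi$ (as noted in Appendix~\ref{pre_lemmas}), converts this into $L(r''), i \models \varphi$, which is exactly the statement that the suffix $L(r')$ satisfies $\varphi$. Hence $f_u$ is winning from $u$, proving $u \in \llbracket \varphi \rrbracket$.

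The only point that requires some care is the bookkeeping in verifying that $r''$ is a play under $f$, especially at the ``splice'' index $j = i-1$, where the outgoing transition must agree with $f$'s prescription given the shortened history; but this is immediate since $r_i = u = r'_0$ and the history before $r_{i-1}$ is literally the same in both $r$ and $r''$. I expect no real obstacle beyond this notational matching, since the lemma essentially amounts to saying that stable winning conditions give rise to ``closed under reachable states'' winning sets — a property already implicit in the fact that $\square\varphi \equiv \varphi$.
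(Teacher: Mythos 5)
Your proof is correct and follows essentially the same route as the paper's: the paper compresses your explicit ``shifted strategy'' construction into the one-line claim that, by stability, a winning strategy for $(G,\varphi)$ is also winning for $(G,\square\llbracket\varphi\rrbracket)$, so every play under $f$ stays inside $\llbracket\varphi\rrbracket$. You simply make explicit the residual-strategy argument that justifies that claim, which is a fair (indeed slightly more careful) rendering of the same idea.
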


\begin{proof}We note that it suffices to show $V^* \subseteq \llbracket \varphi \rrbracket$. The other direction is immediate due to the definition of $V^*$. Note that since $\varphi$ is a stable formula, it is closed under suffixes. This means that any strategy $f$ that is winning for $(G, \varphi)$ is winning for
$(G, \square \llbracket \varphi \rrbracket)$ as well. Therefore, any play $r \in \cup_{v \in \llbracket \varphi \rrbracket} \cup_{r \in \Omega_{f,v}(G)}$ always stays inside the set $\llbracket \varphi \rrbracket$, hence  $V^* \subseteq \llbracket \varphi \rrbracket$, and the result follows.
\end{proof}

\begin{lemma}\label{syntacticequiv}Let $p$ and $q$ be positional formulas, then
\[\square (\Diamond p \vee \Diamond \square q)\equiv \square \Diamond p \vee \Diamond \square q.\]
\end{lemma}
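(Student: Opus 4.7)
The plan is to prove the two inclusions separately by pointwise semantic reasoning on infinite words, exploiting the fact that both $\square \Diamond p$ and $\Diamond \square q$ are ``tail properties'': their truth at position $0$ is equivalent to their truth at every position $i \in \N$.

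First I would establish two auxiliary observations. For a positional $p$, the formula $\square \Diamond p$ satisfies $w,0 \models \square \Diamond p$ iff $w,i \models \square \Diamond p$ for every $i \in \N$, directly from the semantics of $\square$ and $\Diamond$. For a positional $q$, the formula $\Diamond \square q$ satisfies the same invariance: if $w,i \models \Diamond \square q$, witnessed by some $k \geq i$ with $w,j \models q$ for all $j \geq k$, then the same $k \geq 0$ witnesses $w,0 \models \Diamond \square q$, and vice versa (since if $k$ witnesses $\Diamond \square q$ at $0$, then $\max(i,k)$ witnesses it at $i$). These facts depend only on $p$ and $q$ being positional, so that their truth at a position depends only on the letter at that position.

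For the direction $\square \Diamond p \vee \Diamond \square q \Rightarrow \square(\Diamond p \vee \Diamond \square q)$, I would fix $w$ satisfying the antecedent and a position $i \in \N$, and split on the disjunction: in either case, the tail-invariance above gives $w,i \models \square \Diamond p$ or $w,i \models \Diamond \square q$, and hence $w,i \models \Diamond p \vee \Diamond \square q$, as desired.

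For the converse, I would assume $w,0 \models \square(\Diamond p \vee \Diamond \square q)$ and argue by cases on whether $w,0 \models \Diamond \square q$. If so, the right disjunct of the goal holds. Otherwise, by the tail-invariance for $\Diamond \square q$, we have $w,i \not\models \Diamond \square q$ for every $i$; combined with $w,i \models \Diamond p \vee \Diamond \square q$ (from the hypothesis), this forces $w,i \models \Diamond p$ for every $i$, which is exactly $w,0 \models \square \Diamond p$. I do not expect any real obstacle: the argument is essentially a case split driven by the tail-invariance of $\Diamond \square q$, with the positionality of $p$ and $q$ ensuring these invariance lemmas go through cleanly.
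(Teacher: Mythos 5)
Your proof is correct, but it takes a genuinely different route from the paper's. The paper argues purely equationally: it first folds the two eventualities into one via $\Diamond\varphi_1\vee\Diamond\varphi_2\equiv\Diamond(\varphi_1\vee\varphi_2)$, then applies the distributivity of ``infinitely often'' over disjunction, $\square\Diamond(\varphi_1\vee\varphi_2)\equiv\square\Diamond\varphi_1\vee\square\Diamond\varphi_2$ (a pigeonhole fact), and finally the absorption $\square\Diamond\square q\equiv\Diamond\square q$. You instead reason pointwise on words, with the key observation being the tail-invariance of $\square\Diamond p$ and $\Diamond\square q$ (their truth at position $0$ is equivalent to their truth at every position), which drives a clean case split on whether $\Diamond\square q$ holds. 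Both arguments are sound; the paper's is shorter and reuses off-the-shelf LTL identities, whereas yours is more self-contained and makes explicit \emph{why} the equivalence holds, namely that both disjuncts on the right are suffix-closed/prefix-independent properties. One small remark: the tail-invariance you invoke actually holds for $\square\Diamond\varphi$ and $\Diamond\square\varphi$ with arbitrary $\varphi$, not just positional ones, so your appeal to positionality is an unnecessary (though harmless) restriction; positionality matters elsewhere in the paper but is not what makes this particular lemma go through.
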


\begin{proof}
\begin{align*}\square (\Diamond p \vee \Diamond \square q)
 &\equivbir \square (\Diamond (p \vee \square q))
\equiv \square \Diamond (p \vee \square q) \\
&\equiviki \square \Diamond p \vee \square \Diamond \square q \equivuc \square \Diamond p \vee \Diamond \square q,
\end{align*}
where $\equivbir$ holds since $\Diamond \varphi_1 \vee \Diamond \varphi_2 \equiv \Diamond (\varphi_1 \vee \varphi_2)$,and $\equiviki$ is true because $\square \Diamond (\varphi_1 \vee \varphi_2) \equiv \square \Diamond \varphi_1 \vee \square \Diamond \varphi_2$. Finally, $\equivuc$ follows from $\square \Diamond \square q \equiv \Diamond \square q$.\\
\end{proof}

\begin{lemma}\label{syntacticequiv2}Let $p$ and $q$ be
positional formulas, then \[(\Diamond \square p \implies
\Diamond \square q) \equiv (\Diamond \square p \implies \Diamond \square (p \wedge q)).\]
\end{lemma}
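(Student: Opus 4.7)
The plan is to prove the two directions of the equivalence separately, with the easy direction first and the more substantive one relying on a standard property of $\Diamond\square$ under conjunction.

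For the direction $(\Diamond \square p \implies \Diamond \square (p \wedge q)) \implies (\Diamond \square p \implies \Diamond \square q)$ I would simply observe that $p \wedge q \models q$ as positional formulas, so $\square(p \wedge q) \models \square q$ and hence $\Diamond\square(p \wedge q) \models \Diamond\square q$. Plugging this monotonicity into the consequent of the assumed implication gives the claim directly.

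For the reverse direction, assume $w,i \models \Diamond \square p \implies \Diamond \square q$ and that $w,i \models \Diamond\square p$; I want $w,i \models \Diamond\square(p\wedge q)$. The hypothesis yields $w,i \models \Diamond\square q$. The key lemma I will invoke is
\[
\Diamond\square p \wedge \Diamond\square q \equiv \Diamond\square(p \wedge q),
\]
which I would justify by unfolding semantics: from $w,i\models\Diamond\square p$ pick $k_1\geq i$ with $w,j\models p$ for every $j\geq k_1$, from $w,i\models\Diamond\square q$ pick $k_2\geq i$ with $w,j\models q$ for every $j\geq k_2$, and then set $k=\max\{k_1,k_2\}$ so that $w,j\models p\wedge q$ for every $j\geq k$, giving $w,i\models\Diamond\square(p\wedge q)$. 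The other inclusion in that lemma is immediate from $p\wedge q\models p$ and $p\wedge q\models q$.

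I expect no real obstacle here; the only subtle point is resisting the temptation to treat $\Diamond\square$ as distributing over conjunction in general (it does, but only for the $\supseteq$ direction one needs, and the argument requires the explicit choice of $\max\{k_1,k_2\}$ rather than any purely syntactic manipulation). Once that small semantic argument is in place, the two directions close the equivalence.
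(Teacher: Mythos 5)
Your proof is correct and rests on the same key fact as the paper's, namely the equivalence $\Diamond\square(p\wedge q)\equiv\Diamond\square p\wedge\Diamond\square q$; the paper uses this as the first step of a short chain of propositional equivalences (distributing $\vee$ over $\wedge$), whereas you chase the two implications on words and supply the explicit $\max\{k_1,k_2\}$ argument that the paper leaves implicit. The approaches are essentially the same, with yours merely being more verbose about the semantic justification.
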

\begin{proof}
\begin{align*}&(\Diamond \square p \implies \Diamond \square (p \wedge q))  
\equiv (\Diamond \square p \implies (\Diamond \square p \wedge \Diamond \square q))
\\&\equiv (\neg (\Diamond \square p) \vee (\Diamond \square p \wedge \Diamond \square q))\\
&\equivbir (\neg (\Diamond \square p) \vee \Diamond \square p) \wedge (\neg (\Diamond \square p) \vee \Diamond \square q) \\
& \equiv \textbf{True} \wedge (\neg (\Diamond \square p) \vee \Diamond \square q) \equiv (\Diamond \square p \implies \Diamond \square q),
\end{align*}
where $\equivbir$ holds because $\vee$ distributes over $\wedge$.
\end{proof}

\section{Proof of Lemma 1}
\label{appendix3}
Given a GR(1) formula $\varphi$, the following holds:
\begin{equation*}
\varphi \equiv \bigvee\limits_{i_1 \in I_1} \Diamond \square \neg a_i \vee \bigwedge\limits_{i_2 \in I_2} \square \Diamond g_{i_2} \equivbir \square \bigwedge\limits_{i_2 \in I_2}\left(\left(\bigvee\limits_{i_1 \in I_1} \Diamond \square \neg a_{i_1}\right) \vee \Diamond g_{i_2}\right),
\end{equation*}
where $\equivbir$ follows from very similar arguments to those in the proof of Lemma \ref{syntacticequiv} in Appendix \ref{pre_lemmas}.
Note that $\vee_{i_1 \in I_1} \Diamond \square \neg a_{i_1}$ implies $\varphi$, i.e., $W\left(\vee_{i_1 \in I_1} \Diamond \square \neg a_{i_1}\right) \subseteq W(\varphi).$ Therefore, \mbox{$\varphi \equiv \square \left( \wedge_{i_2 \in I_2} \Diamond g_{i_2} \vee \psi_{i_2}\right)$,} where $\psi_{i_2}:=\vee_{i_1 \in I_1} \Diamond \square \neg a_{i_1},$ for all $i_2 \in I_2$, which completes the proof of the lemma.

\section{Proof of Lemma \ref{lemma:mdfragment}}
\label{app2proof1}
\begin{eqnarray*}
\varphi=\bigwedge\limits_{i=1}^m \left(\Diamond \square M_i \implies
\bigvee\limits_{j=1}^{t_i}\Diamond \square T_{i,j}\right)
&\equivbir& \bigwedge\limits_{i=1}^m \left(\square \Diamond \neg M_i \vee \bigvee\limits_{j=1}^{t_i}\Diamond \square (M_i \wedge T_{i,j})\right)\\
&\equiviki& \square \bigwedge\limits_{i=1}^m \left(\Diamond \neg M_i \vee \bigvee\limits_{j=1}^{t_i}\Diamond \square (M_i \wedge T_{i,j})\right),
\end{eqnarray*}
where $\equivbir$ is due to Lemma \ref{syntacticequiv2}, while $\equiviki$ follows from Lemma \ref{syntacticequiv} and \linebreak $\square \varphi_1 \wedge \square \varphi_2 \equiv \square (\varphi_1 \wedge \varphi_2)$.

The last formula has the form given in the statement of the lemma, where $p_i$ is $\neg M_i$ and
$\psi_{i}$ is $\vee_{j=1}^{t_i}\Diamond \square \left(M_i \wedge T_{i,j}\right)$. Then, we are only left with showing that $W_G(\psi_i) \subseteq W_G(\varphi)$.

Recall that in MT games for all $v \in V$, if $M_i \in L(v)$ then \mbox{$M_j \not\in L(v)$}
for all $j \not=i$. It follows that for any $r \in V^\omega$ we have:
$L(r) \models \Diamond \square M_i \implies L(r) \models \Diamond \square \neg M_j$, for all $j \not = i$.
Moreover, note that $W(\Diamond \square \neg M_j) \subseteq W(\square \Diamond \neg M_j)$. Therefore, the following holds:
\begin{align}\label{inequal2}\begin{split}W_G\left(\bigvee\limits_{j=1}^{t_i}\Diamond \square (M_i \wedge T_{i,j})\right)
&\subseteq W\left(\square\bigwedge\limits_{\substack{\ell \in I_{\setminus i}}}
\Diamond \neg M_\ell\right)\, \text{where $I_{\setminus i}=\{1,2, \ldots, m\} \setminus \{i\}$}\\
& \subseteq  W\left(\square \left(\bigwedge\limits_{\ell \in I_{\setminus i}}
\Diamond \neg M_\ell \vee \bigvee\limits_{j=1}^{t_\ell}\Diamond \square (M_\ell \wedge T_{\ell,j})\right)\right).
\end{split}
\end{align}
Also note that
\begin{equation}\begin{split}\label{inequal1}W_G\left(\bigvee\limits_{j=1}^{t_i}\Diamond \square
(M_i \wedge T_{i,j})\right) &\subseteq W_G\left(\square \Diamond \neg M_i \vee
\bigvee\limits_{j=1}^{t_i}\Diamond \square (M_i \wedge T_{i,j})\right)\\
&=W_G\left( \square \left(\Diamond \neg M_i \vee \bigvee\limits_{j=1}^{t_i}\Diamond\square (M_i \wedge T_{i,j})\right)\right),
\end{split}\end{equation}
where the last equality is due to Lemma \ref{syntacticequiv}.

By combining the inclusions (\ref{inequal1}) and (\ref{inequal2}) we get
 \begin{eqnarray*} W_G\left(\bigvee\limits_{j=1}^{t_i}\Diamond \square (M_i \wedge T_{i,j})\right) \subseteq W_G\left(\square\bigwedge\limits_{i=1}^m\left( \Diamond \neg M_i \vee \bigvee\limits_{j=1}^{t_i}\Diamond \square (M_i \wedge T_{i,j})\right)\right),\end{eqnarray*}
which completes the proof of the lemma.

\section{Proof of Theorem \ref{theorem:compsound}}
\label{soundness_app}
Let $Z^*=\nu Z \bigcap\limits_{i \in I} \llbracket\psi_i \vee \Diamond (p_i \wedge \Circle Z) \rrbracket$.
We start by proving $Z^* \subseteq \left\llbracket \square \wedge_{i \in I} \varphi_i \right\rrbracket$.
We make the following observation:
\begin{small}
\begin{equation*}
\left((\Sigma^* p_1)
(\Sigma^* p_2)\ldots(\Sigma^* p_{|I|})\right)^\omega
=W\left(\square \bigwedge\limits_{i \in I} \Diamond p_i\right) \subseteq W\left(\square \bigwedge\limits_{i \in I} \Diamond p_i \vee \psi_i\right).
\end{equation*}
\end{small}
This suggests that a strategy that visits all $p_i$'s in a circular
fashion is winning for player $0$. We pick the visiting order
$p_1p_2\ldots p_i\ldots p_{|I|}$, since it is enough to find
one winning strategy. Therefore, whenever a play visits a state
that satisfies $p_i$ player $0$ should be able to switch to a
strategy that is winning for the game with the winning condition $\Diamond p_{i+1 (mod |I|)}$.
Next, we explain that this is in fact possible on $Z^*$.

The game starts at a state in $Z^*$. Player $0$ follows the strategy that is 
winning for the game $(G, \psi_i \vee \Diamond (p_i \wedge \Circle Z^*))$, from $Z^*$.
If the game reaches a state $v \in \llbracket p_i \rrbracket$, then player $0$ forces a visit to $Z^*$.
After that player $0$ starts following a strategy that is winning for the game with the winning condition:
$\psi_{i+1 (mod |I|)} \vee \Diamond(p_{i+1 (mod |I|)} \wedge \Circle Z^*).$
This switching is possible since $Z^* \subseteq \llbracket \psi_i \vee \Diamond (p_i \wedge \Circle Z^*)\rrbracket$, for all $i \in I$.
The circular switching can be implemented using a counter, with $|I|$ states.

Due to the disjunction of the reachability part of the formula with $\psi_i$, it is true 
that a play that follows the above strategy can be winning for 
$(G, \psi_i)$ for some $i \in I$, instead of $(G, \Diamond p_i)$ for some $i \in I$.
However, since we assumed that for each $i \in I$, $\psi_i$ is an absolute liveness formula, and 
\mbox{$W_G(\psi_i) \subseteq
W\left(\square\wedge_{i \in I} \varphi_i\right)$,}
even in this case the play is winning for $\square \bigwedge\limits_{i \in I} \varphi_i$. Therefore, $Z^* \subseteq \left\llbracket \square \wedge_{i \in I} \varphi_i \right\rrbracket$.

Now, we show that the other direction, i.e., \mbox{$\left\llbracket \square \wedge_{i \in I} \varphi_i \right\rrbracket \subseteq Z^*$.}
\vspace{1mm}
To show that $\left\llbracket \square \wedge_{i \in I} \varphi_i \right\rrbracket \subseteq Z^*$, it is sufficient to show $\left\llbracket \square \wedge_{i \in I} \varphi_i \right\rrbracket \subseteq F\left(\left\llbracket \square \wedge_{i \in I} \varphi_i \right\rrbracket\right)$, where $F(Z):=\cap_{i \in I} \left\llbracket\psi_i \vee \Diamond (p_i \wedge \Circle Z) \right\rrbracket$ (see e.g. \cite{tarski}).
Since $\square \wedge_{i \in I} \varphi_i$ is a stable formula, we can invoke Lemma~\ref{always2}, with $\varphi=\square \wedge_{i \in I} \varphi_i$ and conclude that \begin{equation*}\llbracket \square \wedge_{i \in I} \varphi_i \rrbracket = V^*,\end{equation*}
where $V^* = \cup_{v \in \llbracket \square \wedge_{i \in I} \varphi_i \rrbracket} \cup_{r \in \Omega_{v,f}(G)} \cup_{i \in \N} r_i$.
\begin{eqnarray*}
\llbracket \square \wedge_{i \in I} \varphi_i \rrbracket = V^* &\incbir& \bigcap\limits_{i \in I} \left\llbracket (\psi_i \vee \Diamond p_i) \wedge \square V^*\right\rrbracket \\
&\subseteq&  \bigcap\limits_{i \in I} \left\llbracket (\psi_i \vee \Diamond (p_i \wedge \Circle V^*) \right\rrbracket,
\end{eqnarray*}
where $\incbir$ follows from the definition of $V^*$, since it includes all states visited under the winning strategy for player 0 in $(G, \varphi)$. We just proved that $V^* \subseteq F(V^*)$. Note that, for any $S \subseteq V$ we have $S \subseteq F(S) \implies S\subseteq Z^*$ due to \cite{tarski}. This shows that $V^* = \llbracket \square \wedge_{i \in I} \varphi_i \rrbracket \subseteq Z^*$, which completes the proof.
%

\section{Strategy Synthesis}
\label{synthesis}
Recall that a strategy is a partial function \mbox{$f:V^*\times V_0\to V$} such that whenever $f(r,v)$ is defined, $(v,f(r,v))\in E$. We next construct a memoryless strategy \mbox{$f:V_0\to V$} based on a set of edges that can be computed from the intermediate results obtained when computing the fixed-point in (\ref{finalfixed-point}).
%

We start with some additional notation.
We use $Y_i^{*\ell}$ to denote the set computed at the $\ell^{th}$ iteration of the following fixed-point computation over $Y$:
\begin{equation*}\hspace{-0.8mm}\mu Y \hspace{-0.3mm} \left(\bigcup\limits_{j=1}^{t_\ell}\hspace{-0.15mm}\nu X \hspace{-0.2mm} (\Pre(X) \cap \llbracket M_i \wedge T_{i,j}\rrbracket) \right.
\left.\hspace{-0.2mm} \cup (\llbracket \neg M_i \rrbracket \cap \llbracket \varphi \rrbracket) \hspace{-0.2mm}\cup\hspace{-0.3mm} \Pre(Y) \hspace{-4mm}\phantom{\bigcup\limits_j}\right)\hspace{-0.2mm}.\end{equation*}
Similarly, $X_{i,j}^{*\ell*}$ denotes
\begin{equation*}\nu X (\Pre(X) \cap \llbracket M_i \wedge T_{i,j} \rrbracket) \cup (\llbracket \neg M_i \rrbracket \cap \llbracket \varphi \rrbracket) \cup \Pre(Y_i^{*\ell})).\end{equation*} To simplify the construction of the strategy, without loss of generality we assume that the modes are exhaustive, i.e., $\cup_i \llbracket M_i \rrbracket = V$. For each mode $M_k$, where \mbox{$k \in \{1,2, \ldots, m\}$,} we define the set of edges \mbox{$E_k:=E_{k,1} \cup E_{k,2}$}  such that:
\begin{align*}
E_{k,1} &= \bigcup\limits_{\ell>1}\left\{(v, v') \in E\right.  \vert \left. v \in Y_k^{*\ell} \land v \not\in Y_k^{*<\ell} \land v' \in Y_k^{* < \ell} \right\} ,\notag\\
E_{k,2} &= \bigcup\limits_{j=1}^{t_k}\bigcup\limits_{\ell}\left\{ (v,v') \in E\right. \vert  v \in X_{k,j}^{*\ell*} \cap \llbracket M_k \wedge T_{k,j}\rrbracket \left.\land\, v \not\in X_{k,j}^{*<\ell*} \land v' \in X_{k,j}^{*\ell*}\right\},\notag
\end{align*}
where $Y_k^{*< \ell}=\bigcup\limits_{0 \leq i < \ell} Y_k^{*i}$ and $X_{k,j}^{*<\ell*}=\bigcup\limits_{0 \leq \ell < k} X_{k,j}^{*\ell*}$. 
$E_{k,1}$ corresponds to the transitions, that player $0$ can force the game to make progress towards a state in $\llbracket \neg M_k \rrbracket$ or a state that will not leave $\llbracket M_k \wedge T_{k,j}\rrbracket$ forever for some $j$. The edges in $E_{j,2}$ are the transitions, where the game is at a state in $\llbracket M_k \wedge T_{k,j} \rrbracket$, and player $0$ can force the game to stay in $\llbracket M_k \wedge T_{k,j} \rrbracket$ but cannot force it to make progress towards a state in $\llbracket \neg M_k \rrbracket$. Note that player 0 still wins by always taking the transitions in $E_{j,2}$ since even if there is no progress towards $\llbracket \neg M_k \rrbracket$, the game stays in $\llbracket M_k \wedge T_{k,j}\rrbracket$ forever as well.
%
As a final step, we use of edges $E_k$ to define $f: \llbracket \varphi \rrbracket \to V$ as
$f(v_0)=v',$ where $v_0 \in \llbracket M_k \rrbracket, v' \in \llbracket \varphi \rrbracket$ and $(v_0, v') \in E_k$, which completes the construction of the winning strategy.
%
%
%

\section{Proof of Proposition \ref{prop_embed1}}
\label{GR1incMT}
\label{embedding1}
We prove this proposition in two main steps. In the first step, we show that every mode-target game can be transformed into an equivalent GR(1) game.

Let $(G, \varphi)$ be a mode-target game. Then the following holds:
\begin{equation}
 \label{mtequiv}
\begin{split}
\varphi=\bigwedge\limits_{i=1}^m \left(\Diamond \square M_i \implies
\bigvee\limits_{j=1}^{t_i}\Diamond \square T_{i,j}\right) &\equivbir \bigwedge\limits_{i=1}^m \left(\Diamond \square M_i \implies
\bigvee\limits_{j=1}^{t_i} \Diamond \square (M_i \wedge T_{i,j})\right) \\
\end{split}
\end{equation}

Let $\varphi_i = \Diamond\square(M_i \wedge T_i)$ and $\psi_i = \bigvee_{j=1}^{t_i}\Diamond \square (M_i \wedge T_{i,j})$. Since the modes are mutually exclusive, i.e., $M_i \in L(v) \implies M_j \not\in L(v),\, \forall\, j \not=i$ and $\forall v \in V,$ the following holds:
\begin{equation}\label{disjoint_res1}W_G\left(\varphi_i \wedge \bigvee_{i \in I \setminus \{j\}} \psi_j\right) = \emptyset, \forall i \in I.\end{equation}
Then due to Lemma \ref{implicationProof} we get:
 $\varphi \equiv \left(\bigvee_{i=1}^m \Diamond \square M_i \implies \bigvee_{i =1}^m \bigvee_{i=1}^{t_i} \Diamond \square (M_i \wedge T_{i,j})\right).$ Next we show:
\begin{eqnarray*}
&W_G\left(\bigvee_{i=1}^m \Diamond \square M_i \implies \bigvee_{i =1}^m \bigvee_{j=1}^{t_i} \Diamond \square (M_i \wedge T_{i,j})\right) \\
&\hspace{20mm}= W_G \left(\bigvee_{i=1}^m \Diamond \square M_i \implies \bigvee_{j =1}^{\max_i t_i} \Diamond \square \vee_{i=1}^{m} (M_i \wedge \bar{T}_{i,j})\right) 
\end{eqnarray*}
where $\bar{T}_{i,j} = T_{i,j}$ if $j \leq t_i$ and $\bar{T}_{i,j} = \mathtt{false}$, otherwise.

The inclusion:
\begin{equation*}W_G(\varphi) \subseteq W_G \left(\bigvee_{i=1}^m \Diamond \square M_i \implies \bigvee_{j =1}^{\max_i t_i} \Diamond \square \vee_{i=1}^{m} (M_i \wedge \bar{T}_{i,j})\right) 
\end{equation*}
is immediate since $\bigvee\limits_{i=1}^m \bigvee\limits_{j=1}^{t_i} \Diamond\square (M_i \wedge T_{i,j}) \equiv \bigvee\limits_{j=1}^{\max_i t_i} \bigvee\limits_{i=1}^m \Diamond \square (M_i \wedge \bar{T}_{i,j})$ and \linebreak \mbox{$ \bigvee\limits_{j=1}^{\max_i t_i} \bigvee\limits_{i=1}^m \Diamond \square (M_i \wedge \bar{T}_{i,j})$} implies $ \bigvee\limits_{j=1}^{\max_i t_i} \Diamond \square \vee_{i=1}^m (M_i \wedge \bar{T}_{i,j})$. To show the other direction, we start with the following observation. Suppose $r \in V^\omega$, and let $I$ be a finite index set. Then the following semantic relation holds:
\begin{equation}\label{dis_eventuallyalways}L(r) \models \square \bigvee\limits_{i \in I} p_i\, \implies L(r) \models \bigvee\limits_{i \in I} \square p_i \vee \bigvee\limits_{\substack{J \subseteq I,\\ |J|>1}}\bigwedge\limits_{j \in J} \square\Diamond p_j, 
\end{equation}
where each $p_i$ is a positional formula. Note that this follows from the fact that any word satisfying $\square \vee_{i \in I} p_i$ should either always stay in one of the $p_i$'s forever, and hence satisfy $\vee_{i \in I}\square p_i$ or shuffle between at least two different $p_i$'s, i.e., satisfy $\bigvee_{\substack{J \subseteq I,\\ |J|>1}}\bigwedge_{j \in J} \square\Diamond p_j$.
Let $I:=\{1, 2, \ldots m\}$. We are now ready to show the other direction as follows:
\begin{eqnarray*}
&\hspace{-60mm}W_G \left(\bigvee\limits_{i=1}^m \Diamond\square M_i \implies \bigvee\limits_{j=1}^{\max_i t_i}\Diamond \square \bigvee\limits_{i=1}^m (M_i \wedge \bar{T}_{i,j})\right) \\
&\hspace{-4mm}\incbir W_G\left(\bigvee\limits_{j=1}^{\max_i t_i} \bigvee\limits_{i=1}^{m} \Diamond \square (M_i \wedge \bar{T}_{i,j})\right. \vee \bigvee\limits_{j=1}^{\max_i t_i} \bigvee\limits_{i=1}^{m} \bigvee\limits_{\substack{J \subseteq I_m,\\ |J|>1}}\bigwedge\limits_{s \in J}  \square\Diamond (M_{s} \wedge \bar{T}_{s,j}) 
\left. \vee \bigwedge\limits_{i=1}^m \square \Diamond \neg M_i\right) \\
&\hspace{-60mm} \inciki W_G\left(\bigvee\limits_{i=1}^m \bigvee\limits_{j=1}^{\max_i t_i} \Diamond \square (M_i \wedge \bar{T}_{i,j}) \vee \bigwedge\limits_{i=1}^m \square \Diamond \neg M_i \right),
\end{eqnarray*}
where $\incbir$ follows from the inclusion given in (\ref{dis_eventuallyalways}), distributivity of $\Diamond$ with respect to $\vee$ and the syntactic equivalence
\begin{equation*}\Diamond \bigwedge\limits_{s \in J} \square \Diamond (M_{s} \wedge \bar{T}_{s,j})  \equiv \bigwedge\limits_{s \in J} \square \Diamond (M_{s,j} \wedge \bar{T}_{s,j}).\end{equation*}
Due to the disjointness of modes we have $W_G(M_i) \subseteq W_G(\neg M_j), \,\forall j \not = i$, and therefore $\inciki$ follows from the fact that $\bigvee\limits_{\substack{J \subseteq I_m,\\ |J|>1}}\bigwedge\limits_{s \in J}  \square\Diamond (M_{s} \wedge \bar{T}_{s,j})$ implies $\wedge_{i=1}^m \square \Diamond \neg M_i$.
Therefore we have:
\begin{equation}\label{eq:proofLast}W_G(\varphi) = W_G \left(\bigvee\limits_{i=1}^m \Diamond\square M_i \implies \bigvee\limits_{j=1}^{\max_i t_i} \Diamond \square \vee_{i=1}^m (M_i \wedge \bar{T}_{i,j})\right).
\end{equation}
This completes the proof since we can rewrite the formula on the right hand side of the equality \eqref{eq:proofLast} and get the equality in \eqref{Eq:Emb1}.

\section{Description of The Mode and Target Sets in Section \ref{exampleLTI}}
\label{exampleData}
\lstinputlisting{multipleTargetSetDefinitions.m}

\bibliographystyle{splncs}
\bibliography{bibJournal}

\begin{thebibliography}{10}
\providecommand{\url}[1]{\texttt{#1}}
\providecommand{\urlprefix}{URL }

\bibitem{BDDbook}
Akers, S.: Binary decision diagrams. Computers, IEEE Transactions on  C-27(6),
  509--516 (June 1978)

\bibitem{fragment2}
Alur, R., La~Torre, S.: Deterministic generators and games for {LTL} fragments.
  ACM Trans. Comput. Logic  5(1),  1--25 (Jan 2004)

\bibitem{intro4}
Alur, R., Henzinger, T.A., Vardi, M.Y.: Theory in practice for system design
  and verification. ACM SIGLOG News  2(1),  46--51 (Jan 2015),
  \url{http://doi.acm.org/10.1145/2728816.2728827}

\bibitem{fragment1}
Asarin, E., Maler, O., Pnueli, A., Sifakis, J.: Controller synthesis for timed
  automata (1998)

\bibitem{mode_targetweb}
Balkan, A.: \url{http://www.cyphylab.ee.ucla.edu/modetargettac}

\bibitem{ADHSBalkan}
Balkan, A., Vardi, M., Tabuada, P.: Controller synthesis for mode-target games.
  In: Proceedings 5th IFAC Conference on Analysis and Design of Hybrid Systems.
  ADHS (2016)

\bibitem{GR1}
Bloem, R., Jobstmann, B., Piterman, N., Pnueli, A., Saar, Y.: Synthesis of
  reactive(1) designs. J. Comput. Syst. Sci.  78(3),  911--938 (May 2012)

\bibitem{Browne}
Browne, A., Clarke, E., Jha, S., Long, D., Marrero, W.: An improved algorithm
  for the evaluation of fixpoint expressions. Theoretical Computer Science
  178(1-2),  237 -- 255 (1997)

\bibitem{GeneralizedBuchi}
Dziembowski, S., Jurdzinski, M., Walukiewicz, I.: How much memory is needed to
  win infinite games? In: Logic in Computer Science, 1997. LICS '97.
  Proceedings., 12th Annual IEEE Symposium on. pp. 99--110 (Jun 1997)

\bibitem{GeneralizedRabin}
Ehlers, R.: Generalized {R}abin(1) synthesis with applications to robust system
  synthesis. In: Bobaru, M., Havelund, K., Holzmann, G.J., Joshi, R. (eds.)
  NASA Formal Methods, Lecture Notes in Computer Science, vol. 6617, pp.
  101--115. Springer Berlin Heidelberg (2011)

\bibitem{emersonlei}
Emerson, E.A., Lei, C.L.: {Efficient Model Checking in Fragments of the
  Propositional Mu-Calculus (Extended Abstract)}. In: Proceedings of the First
  Annual {IEEE} Symposium on Logic in Computer Science (LICS). pp. 267--278
  (Jun 1986)

\bibitem{ISOstandard}
{ISO 15622:2010 (E)}: Intelligent transport systems -- adaptive cruise control
  systems -- performance requirements and test procedures. Tech. rep.,
  International Organization for Standardization (2010)

\bibitem{jyoHSCC}
Jin, X., Deshmukh, J.V., Kapinski, J., Ueda, K., Butts, K.: Powertrain control
  verification benchmark. In: Proceedings of the 17th International Conference
  on Hybrid Systems: Computation and Control. pp. 253--262. HSCC '14, ACM, New
  York, NY, USA (2014)

\bibitem{Kesten}
Kesten, Y., Piterman, N., Pnueli, A.: Bridging the gap between fair simulation
  and trace inclusion. Information and Computation  200(1),  35 -- 61 (2005)

\bibitem{application1}
Kress-Gazit, H., Fainekos, G.E.: Where's {W}aldo? sensor-based temporal logic
  motion planning. In: in IEEE International Conference on Robotics and
  Automation. pp. 3116--3121 (2007)

\bibitem{fragment3}
K{\v{r}}et{\'i}nsk{\'y}, J., Esparza, J.: Deterministic Automata for the
  (F,G)-Fragment of LTL, pp. 7--22. Springer Berlin Heidelberg, Berlin,
  Heidelberg (2012)

\bibitem{application2}
L., J., Ozay, N., Topcu, U., Murray, R.: Synthesis of reactive switching
  protocols from temporal logic specifications. Automatic Control, IEEE
  Transactions on  58(7),  1771--1785 (July 2013)

\bibitem{Pessoa}
Mazo, M., Davitian, A., Tabuada, P.: Pessoa: A tool for embedded controller
  synthesis. In: Proceedings of the 22Nd International Conference on Computer
  Aided Verification. pp. 566--569. CAV'10, Springer-Verlag, Berlin, Heidelberg
  (2010)

\bibitem{ACCpaper2}
Nilsson, P., Hussien, O., Balkan, A., Chen, Y., Ames, A., Grizzle, J., Ozay,
  N., Peng, H., Tabuada, P.: Correct-by-construction adaptive cruise control:
  Two approaches. IEEE Transactions on Control Systems Technology  (2016)

\bibitem{intro2}
Seshia, S.A.: New frontiers in formal methods: Learning, cyber-physical
  systems, education, and beyond. CSI Journal of Computing  2(4),  R1:3--R1:13
  (June 2015)

\bibitem{intro3}
Sifakis, J.: System design automation: Challenges and limitations. Proceedings
  of the {IEEE}  103(11),  2093--2103 (2015),
  \url{http://dx.doi.org/10.1109/JPROC.2015.2484060}

\bibitem{Sistla}
Sistla, A.: Safety, liveness and fairness in temporal logic. Formal Aspects of
  Computing  6(5),  495--511 (1994)

\bibitem{Streett}
Streett, R.S.: Propositional dynamic logic of looping and converse. In:
  Proceedings of the Thirteenth Annual ACM Symposium on Theory of Computing.
  pp. 375--383. STOC '81, ACM, New York, NY, USA (1981),
  \url{http://doi.acm.org/10.1145/800076.802492}

\bibitem{Paulobook}
Tabuada, P.: Verification and control of hybrid systems: a symbolic approach.
  Springer (2009)

\bibitem{tarski}
Tarski, A.: A lattice-theoretical fixpoint theorem and its applications.
  Pacific J. Math.  5(2),  285--309 (1955),
  \url{http://projecteuclid.org/euclid.pjm/1103044538}

\bibitem{intro1}
Vardi, M.Y.: From Verification to Synthesis, pp. 2--2. Springer Berlin
  Heidelberg, Berlin, Heidelberg (2008),
  \url{http://dx.doi.org/10.1007/978-3-540-87873-5_2}

\bibitem{nuclearpowerplant}
Venz, H., Ruhle, W., Kysela, J.: Start-up and shutdown practices in {BWR}s as
  well as in primary and secondary circuits of {PWR}s, {VVER}s and {CANDU}s.
  Tech. rep., ANT International (2009)

\bibitem{wolf}
Wolff, E., Topcu, U., Murray, R.: Efficient reactive controller synthesis for a
  fragment of linear temporal logic. In: IEEE International Conference on
  Robotics and Automation (ICRA). pp. 5033--5040 (May 2013)

\end{thebibliography}

\end{document}